\numberwithin{equation}{section}
\def\tt{\mathbf{t}}
\def\H{\mathcal H}
\def\R{\mathbb R}
\def\N{\mathbb N}
\def\Z{\mathbb Z}
\def\arc{{\rm arc}}
\newcommand{\dist}{\mathop{\mathrm{dist}}}
\def\e{\varepsilon}
\def\s{\sigma}
\def\S{\Sigma}
\def\Div{{\rm div}\,}
\def\om{\omega}
\def\l{\lambda}
\def\g{\gamma}
\def\k{\kappa}
\def\de{\delta}
\def\Id{{\rm Id}}
\def\spt{{\rm spt}}
\def\pa{\partial}
\def\00{{\bf 0}}
\def\bd{{\rm bd}\,}
\def\INT{{\rm int}\,}
\def\E{\mathcal{E}}
\def\F{\mathcal{F}}
\def\T{\mathcal{T}}
\newcommand{\vol}{\mathrm{vol}\,}
\renewcommand{\a}{\alpha}
\renewcommand{\b}{\beta}
\renewcommand{\d}{\mathrm{d}}
\newcommand{\hd}{\mathrm{hd}}
\renewcommand{\l}{\lambda}
\renewcommand{\om}{\omega}
\renewcommand{\Div}{{\rm div \,}}
\newcommand{\ov}{\overline}
\newcommand{\diam}{\mathrm{diam}}
\newcommand{\cc}{\subset\subset}
\def\Lip{{\rm Lip}\,}
\def\PHI{\mathbf{\Phi}}
\def\PSI{\mathbf{\Psi}}
\def\ttau{\boldsymbol{\tau}}
\def\tt{\mathbf{t}}
\newtheorem*{theorem*}{Theorem}
\newtheorem{theorem}{Theorem}[section]
\newtheorem{lemma}[theorem]{Lemma}
\newtheorem{proposition}[theorem]{Proposition}
\newtheorem{corollary}[theorem]{Corollary}
\newtheorem{remark}[theorem]{Remark}
\title{A sharp quantitative version of Hales' isoperimetric honeycomb theorem}
\author{M. Caroccia}
\address{Dipartmento di Matematica, Universit\`a di Pisa, Largo Bruno Pontecorvo 5, 56127 Pisa, Italy}
\email{caroccia.marco@gmail.com}
\author{F. Maggi}
\address{Department of Mathematics, University of Texas at Austin, Austin, TX, USA}
\email{maggi@math.utexas.edu}
\begin{document}

\begin{abstract} We prove a sharp quantitative version of Hales' isoperimetric honeycomb theorem by exploiting a quantitative isoperimetric inequality for polygons and an improved convergence theorem for planar bubble clusters. Further applications include the description of isoperimetric tilings of the torus with respect to almost unit-area constraints or with respect to almost flat Riemannian metrics.
\end{abstract}

\maketitle

\section{Introduction} The isoperimetric nature of the planar ``honeycomb tiling''  has been apparent since antiquity. Referring to \cite[Section 15.1]{Morgan} for a brief historical account on this problem, we just recall here that Hales' isoperimetric theorem, see inequality \eqref{hexagonal honeycomb thm torus} below, gives a precise formulation of this intuitive idea. Our goal here is to strengthen Hales' theorem into a quantitative statement, similarly to what has been done with other isoperimetric theorems in recent years (see, for example, \cite{fuscomaggipratelli,FigalliMaggiPratelliINVENTIONES}).

Following \cite[Chapters 29-30]{maggiBOOK}, we work in the framework of sets of finite perimeter. A {\it $N$-tiling} $\E$ of a two-dimensional torus $\T$ is a family $\E=\{\E(h)\}_{h=1}^N$ of sets of finite perimeter in $\T$ such that $|\T\setminus\bigcup_{h=1}^N\E(h)|=0$ and $|\E(h)\cap\E(k)|=0$ for every $h,k\in\N$, $h\ne k$.  The volume of $\E$ is $\vol(\E)=(|\E(1)|,...,|\E(N)|)$, and the relative perimeter of $\E$ in $A\subset\T$ is given by
\[
P(\E;A)=\frac12\sum_{h=1}^N P(\E(h);A)\,,
\]
(where $P(E;A)=\H^1(A\cap\pa E)$ if $E$ is an open set with Lipschitz boundary), while the distance between two tilings $\E$ and $\F$ is defined as
\[
\d(\E,\F)=\frac12\sum_{h=1}^N|\E(h)\Delta\F(h)|\,.
\]
We say that $\E$ is a {\it unit-area tiling} of $\T$ if $|\E(h)|=1$ for every $h=1,...,N$. (In particular, in that case, it must be $N=|\T|$). Let $\hat H$ denote the reference unit-area hexagon in $\R^2$ depicted in
\begin{figure}
  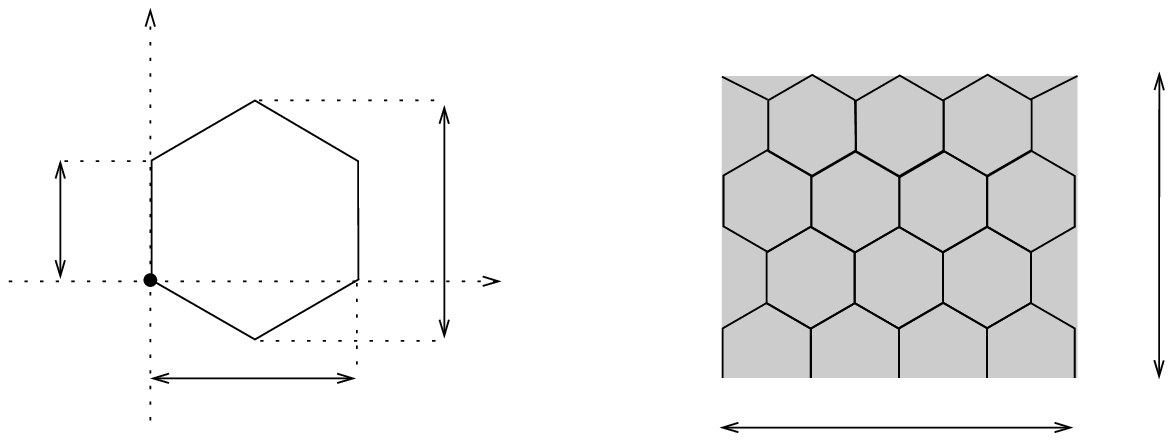\caption{{\small Thoroughout the paper $\hat H$ denotes the unit-area regular hexagon in $\R^2$ depicted on the left and we set $H=\hat H/_{\approx}$. Since $|H|=1$, one has $P(H)=2(12)^{1/4}$, and the side-length of $H$ is thus $\ell=(12)^{1/4}/3$. On the right, the torus $\T$ (depicted in gray) and the reference unit-area tiling $\H$ of $\T$ (with $\a=\b=4$). Notice that $N=|\T|=\a\,\b$. The chambers of $\H$ are enumerated so that $\H(1)=H$, $\{\H(h)\}_{h=1}^\b$ is the bottom row of hexagons in $\T$, and, more generally, if $0\le k\le\a-1$, then $\{\H(h)\}_{h=1+k\beta}^{(k+1)\beta}$ is the $(k+1)$th row of hexagons in $\T$.}}\label{fig hexagon}
\end{figure}
Figure \ref{fig hexagon}, so that $\ell=(12)^{1/4}/3$ is the side-length of $\hat H$. Given $\a,\b\in\N$, let us consider the torus $\T=\T_{\a,\b}=\R^2/_\approx$ where
\[
(x_1,x_2)\approx(y_1,y_2)\qquad\mbox{if and only if}\qquad \mbox{$\exists h,k\in\N$ s.t.}\quad
\left\{\begin{array}{l}
x_1=y_1+h\,\beta\,\sqrt{3}\,\ell\,,
\\
x_2=y_2+k\,\a\,\frac{3}2\,\ell\,,
\end{array}\right .
\]
and set $H=\hat H/_{\approx}\subset\T$. In order to avoid degenerate situations, {\it we shall always assume that}
\begin{equation}
  \label{occhio}
  \mbox{$\a$ is even and $\beta\ge 2$}\,.
\end{equation}
 In this way, $H$ is a regular unit-area hexagon (i.e., the vertexes of $\hat H$ belong to six different equivalence classes) and one obtains a reference unit-area tiling $\H=\{\H(h)\}_{h=1}^N$ of $\T$ consisting of $\a$ rows and $\beta$ columns of regular hexagons by considering translations of $H$ by $(h\,\sqrt{3}\ell,3\ell\,k/2)$ ($h,k\in\Z$); see again Figure \ref{fig hexagon}. Under this assumption, {\it Hales' isoperimetric honeycomb theorem} asserts that
\begin{equation}
  \label{hexagonal honeycomb thm torus}
  P(\E)\ge P(\H)\,,
\end{equation}
whenever $\E$ is a unit-area tiling of $\T$, and that $P(\E)=P(\H)$ if and only if (up to a relabeling of the chambers of $\E$) one has $\E(h)=v+\H(h)$ for every $h=1,...,N$ and for some $v=(t\sqrt{3}\ell,s\ell)$ with $s,t\in[0,1]$. Our first main result strengthens this isoperimetric theorem in a sharp quantitative way.

\begin{theorem}
  \label{thm main periodic}
  There exists a positive constant $\k$ depending on $\T$ such that
  \begin{equation}
  \label{hexagonal honeycomb thm torus quantitative}
  P(\E)\ge P(\H)\,\Big\{1+\k\,\a(\E)^2\Big\}\,,
  \end{equation}
  whenever $\E$ is a unit-area tiling of $\T$ and
  \[
  \a(\E)=\inf \d(\hat{\E},v+\H)
  \]
  where the minimization takes place among all $v=(t\sqrt{3}\ell,s\ell)$, $s,t\in[0,1]$, and among all tilings $\hat\E$ obtained by setting $\hat\E(h)=\E(\s(h))$ for a permutation $\s$ of $\{1,...,N\}$. (Recall that the chambers of the reference honeycomb $\H$ are enumerated in a specific way, see Figure \ref{fig hexagon}.)
\end{theorem}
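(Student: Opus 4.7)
\medskip

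\noindent\textbf{Proof plan.} My strategy is a Cicalese--Leonardi selection principle, which reduces the quantitative estimate to a Fuglede-type analysis of tilings that are $C^{1,\a}$-close to the reference honeycomb $\H$. I would argue by contradiction, assuming the existence of a sequence of unit-area tilings $\E_n$ of $\T$ with $\a(\E_n)>0$ and
\[
P(\E_n)\le P(\H)\,\bigl\{1+\e_n\,\a(\E_n)^2\bigr\},\qquad \e_n\to 0.
\]
Hales' theorem \eqref{hexagonal honeycomb thm torus}, together with the standard compactness of clusters of finite perimeter on $\T$, forces $P(\E_n)\to P(\H)$ and, after suitable translations and relabelings, $\E_n\to\H$ in $L^1$; in particular $\a(\E_n)\to 0$.

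The next step is to apply a selection principle: replace $\E_n$ by a minimizer $\F_n$ of a penalized functional of the form
\[
P(\F)+\L_n\,\bigl|\vol(\F)-\vol(\H)\bigr|+\mu_n\,\bigl|\a(\F)-\a(\E_n)\bigr|,
\]
with $\L_n,\mu_n$ chosen so that $\F_n$ still contradicts the sharp estimate (with a slightly worse constant) and is simultaneously a $(\L,r)$-perimeter almost-minimizer uniformly in $n$. The regularity theory for planar bubble clusters then gives uniform $C^{1,1/2}$ control of $\pa\F_n$. Applying now the improved convergence theorem for planar bubble clusters (announced in the abstract), for $n$ large $\F_n$ agrees with a small translate $v_n+\H$ up to a $C^{1,\a}$-small normal perturbation of the interfaces and a small displacement of the triple junctions.

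In these coordinates one can Taylor-expand the excess $P(\F_n)-P(\H)$: the first-order term vanishes by the unit-area constraint together with the $120^{\circ}$ Plateau condition at the vertexes of $\H$, so the excess reduces to a coercive quadratic form in the normal-graph functions and the vertex displacements, whose null space is exactly the rigid translations of $\T$ that are quotiented out by the definition of $\a$. To convert this Fuglede-type expansion into the sharp cluster estimate I would then feed in the quantitative isoperimetric inequality for polygons chamber by chamber, obtaining $P(\F_n(h))\ge P(H)+c\,|\F_n(h)\Delta(v_{n,h}+H)|^2$ with possibly distinct translations $v_{n,h}$.

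The main obstacle I foresee is precisely the step of \emph{coupling the chamber-wise translations}: one must show that the deviation of the $v_{n,h}$ from a common $v_n$ is itself charged by extra cluster perimeter, since distinct chamber translations force mismatches along the shared edges. This is where the bubble structure, not just the polygon inequality, is essential. Completing this coupling and then summing the chamber inequalities (via Cauchy--Schwarz and the definition of $\a$) would give $P(\F_n)-P(\H)\ge c\,\a(\F_n)^2$ for some $c>0$ independent of $n$, contradicting the defining inequality of $\F_n$ for $n$ large.
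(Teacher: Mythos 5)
Your overall strategy---contradiction, Cicalese--Leonardi selection, improved convergence, and a Fuglede-type expansion around $\H$ combined with a chamber-wise polygon inequality---does match the paper's. But the step you correctly flag as ``the main obstacle,'' coupling the chamber-wise translations, is left unresolved in your plan, and it is exactly where the real content of the argument lies. In the paper this is Step three of the proof of Theorem~\ref{hexagonal honeycomb thm torus quantitative small}: after Lemma~\ref{lemma hales} produces, for each chamber, a regular unit-area hexagon $K_h$ with $\hd(\pa\Pi_h,\pa K_h)\le C\sqrt\de$ (here $\Pi_h$ is the convex envelope of $\S(\E)\cap\pa\E(h)$ and $\de=P(\E)-P(\H)$), the alignment of all the $K_h$'s to a single translate $v+\H$ is proved by a reflection rigidity argument: $K_1$ could a priori be rotated by an angle $\theta$ of order $\e_0\gg\sqrt\de$ relative to $H$, but walking around a full row of $\b$ hexagons by successive edge-reflections starting from $J_1=K_1$, using $\hd(\pa J_h,\pa K_h)\le C\sqrt\de$ at each step, shows the chain fails to close up unless $|\theta|\le C\sqrt\de$. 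This yields $\hd(\S(\E),\S(v+\H))\le C\sqrt\de$ for a \emph{single} $v$, and it is precisely this singular-set control that makes the Fuglede expansion coercive, because it bounds the tangential part of the diffeomorphism $f_0$ via \eqref{friday01}--\eqref{deficit 0}; without it, the expansion in Step four has an uncontrolled tangential error. Your chamber-by-chamber sketch also glosses over two technical points the paper needs: (i) a chamber $\E(h)$ is not a convex hexagon, so one first passes to $\Pi_h$ via Dido's chordal isoperimetric inequality before the polygon inequality of Section~\ref{section indrei} applies, and (ii) this introduces the signed term $\tfrac{P(H)}{2}(|\Pi_h|-|\E(h)|)$ in \eqref{utile}, which one crucially needs to telescope upon summing over $h$ since $\sum_h|\Pi_h|=\sum_h|\E(h)|=|\T|$. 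Your proposed penalization (volume penalty plus $|\a(\cdot)-\a(\E_n)|$) is a mild variant of the paper's choice of $P+\d(\cdot,\F_k)^2$ with volume-fixing variations to recover $(\Lambda,r_0)$-minimality, and that difference is harmless; the gap is the unfinished coupling step.
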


\begin{remark}
  {\rm We notice that \eqref{hexagonal honeycomb thm torus quantitative} is sharp in the decay rate of $\a(\E)$ in terms of $P(\E)-P(\H)$. Indeed, if $\om:(0,\infty)\to(0,\infty)$ is such that $P(\E)\ge P(\H)(1+\om(\a(\E)))$ for every unit-area tiling $\E$, then, for some $s_0>0$, one must have $\om(s)\le C\,s^2$ for $s\in(0,s_0)$. Indeed, one can explicitly construct a one-parameter family $\{\E_t\}_{0<t<\e}$ of unit-area tilings of $\T$ such that $P(\E_t)\le P(\H)(1+C\,\a(\E_t)^2)$ and $\{\a(\E_t):t\in(0,\e)\}=(0,s_0)$, so that $\om(s)\le C\,s^2$ for every $s\in(0,s_0)$.}
\end{remark}

In Theorem \ref{hexagonal honeycomb thm torus quantitative small} below, inequality \eqref{hexagonal honeycomb thm torus quantitative} is proven in much stronger form for $\pa\E$ in a special class of $C^1$-small $C^{1,1}$-diffeomorphic images of $\pa\H$, see \eqref{stabilita hd} and \eqref{stabilita f C1}.  The two main ingredients in the proof of Theorem \ref{hexagonal honeycomb thm torus quantitative small} are: a quantitative version of the hexagonal isoperimetric inequality, which we deduce from \cite{shilleto,indreinurbekyan}, see Lemma \ref{lemma indrei}; and a quantitative version of Hales' hexagonal isoperimetric inequality (the key tool behind Hales' proof of \eqref{hexagonal honeycomb thm torus}), proved in Lemma \ref{lemma hales}. These inequalities allow one to prove that each chamber of the unit-area tiling $\E$ is actually close, in terms of the size of $P(\E)-P(\H)$, to some regular unit-area hexagon in $\T$. These hexagons have no reason to fit nicely into an hexagonal honeycomb of $\T$ (that is, a translation of $\H$), therefore we need an additional argument to show that, up to translations and rotations of order $P(\E)-P(\H)$, one can achieve this. Having completed the proof of Theorem \ref{hexagonal honeycomb thm torus quantitative small}, we deduce Theorem \ref{thm main periodic} by a contradiction argument based on an improved convergence theorem for planar bubble clusters that was recently established in \cite{CiLeMaIC1}, and along the lines of the selection principle method proposed in \cite{CicaleseLeonardi}. Another consequence of Theorem \ref{hexagonal honeycomb thm torus quantitative small}, obtained in a similar vein, is the following result, which gives a precise description of isoperimetric tilings of $\T$ subject to an ``almost unit-area'' constraint.

\begin{theorem}
  \label{thm pertub volumes}
  There exist positive constants $C_0,\de_0$ depending on $\T$ with the following property. If $\sum_{h=1}^Nm_h=N$ with $m_h>0$ and $|m_h-1|<\de_0$ for every $h=1,...,N$, and if $\E_m$ is an $N$-tiling of $\T$ which is a minimizer in
  \begin{equation}
    \label{variational problem volumes}
      \inf\big\{P(\E):|\E(h)|=m_h\quad\forall h=1,...,N\big\}
  \end{equation}
  then, up to a relabeling of the chambers of $\E_m$, there exists a $C^{1,1}$-diffeomorphism $f_m:\pa\H\to\pa\E_m$ such that
  \begin{equation}
    \label{spirit}
  \|f_m-(v+\Id)\|_{C^0(\pa\H)}^2+\|f_m-(v+\Id)\|_{C^1(\pa\H)}^4\le C_0\,\sum_{h=1}^N\,|m_h-1|\,,
  \end{equation}
  for some $v=(t\sqrt{3}\ell,s\ell)$, $s,t\in[0,1]$.
\end{theorem}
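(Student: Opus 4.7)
The plan is to follow the same contradiction-based strategy used in the proof of Theorem \ref{thm main periodic}, now with the volume mismatch $\sum_h|m_h-1|$ playing the role of $\a(\E)^2$. First I would establish the competitor bound
\begin{equation}\label{proposal competitor}
P(\E_m)-P(\H)\le C_1\sum_{h=1}^N|m_h-1|
\end{equation}
by constructing a diffeomorphism $\Phi=\Id+\psi$ of $\T$ with $|\Phi(\H(h))|=m_h$ for every $h$. Since $\sum_h(m_h-1)=0=\int_\T\Div\psi$, the system $\int_{\H(h)}\Div\psi=m_h-1$ is consistent, and a Bogovskii-type construction localized near the skeleton of $\H$ yields $\psi$ with $\|\psi\|_{C^1}\le C\max_h|m_h-1|$. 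Now $\H$ is a critical point of $P$ (straight edges have zero curvature and the Plateau condition $\tau_1+\tau_2+\tau_3=0$ holds at each triple junction), hence $P(\Phi(\H))-P(\H)\le C\|\psi\|_{C^1}^2\le C\,\de_0\sum_h|m_h-1|$, and \eqref{proposal competitor} follows from the minimality of $\E_m$.

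Arguing by contradiction, suppose the statement fails: one obtains sequences $m^n$ with $\sum_h|m_h^n-1|\to 0$ and minimizers $\E_n:=\E_{m^n}$ for which \eqref{spirit} cannot hold with any uniform constant. By \eqref{proposal competitor} one has $P(\E_n)\to P(\H)$, so standard $BV$-compactness together with the rigidity clause of Hales' theorem yields, up to a relabeling, $\E_n\to v_n+\H$ in $L^1$ for some translations $v_n$. The improved convergence theorem of \cite{CiLeMaIC1} then supplies $C^{1,1}$-diffeomorphisms $f_n:\pa\H\to\pa\E_n$ with $\|f_n-(v_n+\Id)\|_{C^1}\to 0$, placing the $\E_n$ in the $C^1$-small regime to which Theorem \ref{hexagonal honeycomb thm torus quantitative small} applies.

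To close the contradiction I would reduce $\E_n$ to a unit-area tiling $\tilde\E_n$ by applying the analogous volume-correcting diffeomorphism, built now around $\E_n$ rather than $\H$. The first-order perimeter cost of this correction equals $\sum_h\lambda_h^n(1-m_h^n)$, where $\lambda_h^n$ are the Lagrange pressures of $\E_n$; since $\E_n\to\H$ in $C^1$ and the pressures of $\H$ all coincide by symmetry, $\lambda_h^n=\lambda+o(1)$, so $\sum_h\lambda_h^n(1-m_h^n)=\lambda\sum_h(1-m_h^n)+o\!\big(\sum_h|m_h^n-1|\big)=o\!\big(\sum_h|m_h^n-1|\big)$. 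This yields $|P(\tilde\E_n)-P(\E_n)|=o\!\big(\sum_h|m_h^n-1|\big)$, and since the diffeomorphism $\tilde f_n$ associated with $\tilde\E_n$ differs from $f_n$ in $C^1$ by $O(\max_h|m_h^n-1|)$, Theorem \ref{hexagonal honeycomb thm torus quantitative small} applied to $\tilde\E_n$ gives, for $n$ large,
$$\|f_n-(v_n+\Id)\|_{C^0}^2+\|f_n-(v_n+\Id)\|_{C^1}^4\le C_3\sum_h|m_h^n-1|,$$
contradicting the failure assumption.

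The main obstacle I foresee is the quantitative control of the Lagrange pressures $\lambda_h^n$ in this last step: one needs $\lambda_h^n=\lambda+o(1)$ at a rate sufficient to absorb the first-order term via the constraint $\sum_h(m_h^n-1)=0$, and this must be extracted carefully from the $C^1$-convergence of the interior interfaces of $\E_n$ to those of $\H$, since pressures enter as mean-curvature jumps across these interfaces. The competitor construction and the compactness/improved convergence steps are by now well-developed in the framework of quantitative isoperimetric inequalities and should not pose significant new difficulties.
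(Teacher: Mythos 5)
Your overall strategy — competitor bound, compactness plus Hales' rigidity, improved convergence to enter the $C^1$-small regime, and then invoke Theorem \ref{hexagonal honeycomb thm torus quantitative small} — matches the paper's proof. However, you over-engineer two steps, and this is precisely what creates the gap you flag at the end.

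First, the refined competitor bound. You invoke criticality of $\H$ (vanishing first variation, equal pressures) to get the second-order estimate $P(\E_m)-P(\H)\le C\de_0\sum_h|m_h-1|$. This is correct but unnecessary: since the target inequality \eqref{spirit} has $\sum_h|m_h-1|$ (not $\sum_h|m_h-1|^2$) on the right, a first-order competitor bound $P(\E_m)-P(\H)\le C\max_h|m_h-1|\le C\sum_h|m_h-1|$ is already enough. The paper gets this directly from Lemma \ref{lemma volume fixing} applied with $\E_0=\E=\F=\H$, $\psi\equiv1$, $\eta_h=m_h-1$, with no appeal to criticality.

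Second, and more importantly, the Lagrange multiplier step. You build a unit-area correction $\tilde\E_n$ and then try to show $|P(\tilde\E_n)-P(\E_n)|=o(\sum_h|m_h^n-1|)$ by identifying the first-order cost with $\sum_h\lambda_h^n(1-m_h^n)$ and proving $\lambda_h^n=\lambda+o(1)$. This is a genuinely delicate estimate (you are right to flag it), but it is not needed. The crude estimate $|P(\tilde\E_n)-P(\E_n)|\le C\max_h|m_h^n-1|$ — which follows immediately from Lemma \ref{lemma volume fixing} with no pressure analysis — already gives $P(\tilde\E_n)-P(\H)\le C\sum_h|m_h^n-1|$ when combined with the competitor bound, and that is all Theorem \ref{hexagonal honeycomb thm torus quantitative small} requires. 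The correction from $\tilde f_n$ back to $f_n$ contributes $\|\tilde f_n-f_n\|_{C^1}^2\le C(\max_h|m_h^n-1|)^2\le C\de_0\sum_h|m_h^n-1|$, which absorbs into the same bound. So the ``main obstacle'' you foresee is self-inflicted; replacing the $o(\cdot)$ requirement by $O(\cdot)$ dissolves it and removes the need for any pressure estimates.

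Two smaller omissions: (i) to apply the improved convergence theorem you first need to establish that the minimizers $\E_n$ are $(\Lambda,r_0)$-minimizing — this does not come for free from constrained minimality, and is exactly what Lemma \ref{lemma volume fixing} is used for in the paper (as in Lemma \ref{thm selection}); (ii) ``$\E_n\to v_n+\H$ in $L^1$'' should be $\E_n\to v+\H$ for a fixed $v$ after passing to a subsequence — a sequence of translations as limit does not make sense.
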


Next, let us consider the family $X$ of those $\Phi\in C^0(\T\times S^{n-1};(0,\infty))$ such that the positive one-homogeneous extension of $\Phi(x,\cdot)$ to $\R^2$ is convex, fix $\psi\in C^0(\T;(0,\infty))$, and consider the isoperimetric problem
\begin{equation}
  \label{finsler}
  \l(\Phi,\psi)=\inf\Big\{\PHI(\E)=\frac12\sum_{h=1}^N\PHI(\E(h)):\int_{\E(h)}\psi=\frac1{N}\int_\T\psi\quad \forall h=1,...,N\Big\}\,,
\end{equation}
where for a set of finite perimeter $E\subset\T$ we have set
\[
\PHI(E;A)=\int_{A\cap\pa^*E}\Phi(x,\nu_E(x))\,d\H^1(x)\,,\qquad \PHI(E)=\PHI(E;\R^n)\,,
\]
provided $\pa^*E$ and $\nu_E:\pa^*E\to S^1$ denote, respectively, the reduced boundary and the measure-theoretic outer unit normal of $E$, see \cite[Chapter 15]{maggiBOOK}. Notice that although we do not assume $\Phi$ to be even, we have nevertheless that $\l(\Phi,\psi)=\l(\hat\Phi,\psi)$ where $\hat\Phi(x,\nu)=(\Phi(x,\nu)+\Phi(x,-\nu))/2$. An interesting example is obtained when $g$ is a Riemannian metric on $\T$ and
\[
\Phi(x,\nu)=\sqrt{g(x)[\nu^\perp,\nu^\perp]}\,,\qquad \psi=\sqrt{\det(g(x))}\,,
\]
where $\nu^\perp=(\nu_2,-\nu_1)$ if $\nu=(\nu_1,\nu_2)$. In this case, \eqref{finsler} boils down to minimizing the total Riemannian perimeter of a partition of $\T$ into $N$-regions of equal Riemannian area.

\begin{theorem}\label{thm pertub metric}
  Given $L>0$ and $\g\in(0,1]$, there exist $C_0,\de_0>0$ (depending on $\T$, $L$ and $\g$) with the following property. If $\E$ is a minimizer in \eqref{finsler} for $\Phi\in X\cap\Lip(\T\times S^1)$ and $\psi\in C^{1,\g}(\T)$ such that
  \begin{gather}\label{hp finsler 1}
  \Lip\,\Phi+\|\psi\|_{C^{1,\g}(\T)}\le L\,,
  \\\nonumber
  \|\Phi-1\|_{C^0(\T\times S^1)}+\|\psi-1\|_{C^0(\T)}<\de_0\,,
  \end{gather}
  then
  \begin{equation}
    \label{venerdi}  \inf_{s,t\in[0,1]}\hd(\pa\E,v+\pa\H)^4\le C_0\,\Big(\|\Phi-\Id\|_{C^0(\T\times S^1)}+\|\psi-1\|_{C^0(\T)}\Big)\,,
  \end{equation}
  where $v=(t\sqrt{3}\ell,s\ell)$ and $\hd(S,T)$ denote the Hausdorff distance between the closed sets $S$ and $T$ in $\T$.
\end{theorem}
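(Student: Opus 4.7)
The plan is to argue by contradiction, mirroring the selection-principle scheme of \cite{CicaleseLeonardi} used to deduce Theorem \ref{thm main periodic} and Theorem \ref{thm pertub volumes} from the $C^1$-small stability statement Theorem \ref{hexagonal honeycomb thm torus quantitative small}. Suppose the conclusion fails for some fixed $L>0$ and $\g\in(0,1]$. Then there exist sequences $\Phi_j\in X\cap\Lip(\T\times S^1)$ and $\psi_j\in C^{1,\g}(\T)$ satisfying \eqref{hp finsler 1} with $\e_j:=\|\Phi_j-1\|_{C^0}+\|\psi_j-1\|_{C^0}\to 0^+$, and minimizers $\E_j$ of \eqref{finsler} with
\[
\inf_{s,t\in[0,1]}\hd(\pa\E_j,v+\pa\H)^4>j\,\e_j,\qquad v=(t\sqrt{3}\ell,s\ell).
\]

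I would first derive Euclidean a-priori bounds. Since $\int_{\E_j(h)}\psi_j=N^{-1}\int_\T\psi_j$ and $\|\psi_j-1\|_{C^0}\le\e_j$, the volumes satisfy $||\E_j(h)|-1|=O(\e_j)$ uniformly in $h$. A $C^{1,1}$-diffeomorphism $\tau_j:\T\to\T$ with $\|\tau_j-\Id\|_{C^1}=O(\e_j)$ can be built so that the $\psi_j$-weighted volumes of $\tau_j(\H)$ match the constraint in \eqref{finsler}, producing a competitor with $\PHI_j(\tau_j(\H))\le P(\H)+C\e_j$. Minimality of $\E_j$ and $\Phi_j\ge 1-\e_j$ then yield $P(\E_j)\le P(\H)+C'\e_j$. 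A volume-fixing surgery converts $\E_j$ into a unit-area tiling $\tilde\E_j$ with $\d(\tilde\E_j,\E_j)=O(\e_j)$ and $P(\tilde\E_j)\le P(\E_j)+O(\e_j)$. Applying Theorem \ref{thm main periodic} to $\tilde\E_j$ gives $\a(\tilde\E_j)^2\le C\e_j$, so after a relabelling one has $\d(\E_j,v_j+\H)\le C\sqrt{\e_j}$ for some $v_j=(t_j\sqrt{3}\ell,s_j\ell)$.

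To upgrade this $L^1$-closeness to $C^{1,1}$-closeness I would invoke the improved convergence theorem of \cite{CiLeMaIC1}. The uniform bounds $\Lip\Phi_j+\|\psi_j\|_{C^{1,\g}}\le L$ make each $\E_j$ a uniform $\L$-minimizer of the Euclidean perimeter on small scales (with $\L$ depending only on $L$), and the proof scheme of \cite{CiLeMaIC1} then delivers, for $j$ large, $C^{1,1}$-diffeomorphisms $f_j:\pa\H\to\pa\E_j$ with $\|f_j-(v_j+\Id)\|_{C^1(\pa\H)}\to 0$. Thus $\E_j$ enters the $C^1$-small regime covered by Theorem \ref{hexagonal honeycomb thm torus quantitative small}. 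Combining the sharp $C^0$-estimate from that theorem with $P(\E_j)-P(\H)=O(\e_j)$ gives $\|f_j-(v_j+\Id)\|_{C^0(\pa\H)}^2\le C\e_j$; since the Hausdorff distance of the image boundaries is bounded by the $C^0$-norm of $f_j-(v_j+\Id)$, we conclude
\[
\hd(\pa\E_j,v_j+\pa\H)^4\le\|f_j-(v_j+\Id)\|_{C^0(\pa\H)}^4\le C^2\e_j^2\le C^2\e_j
\]
for $\e_j$ small, contradicting the standing assumption.

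The main obstacle is the selection-principle step: extending the improved convergence theorem of \cite{CiLeMaIC1}, which is formulated for Euclidean area-minimizing bubble clusters, to the anisotropic weighted minimizers of \eqref{finsler}. The bounds $\Lip\Phi_j+\|\psi_j\|_{C^{1,\g}}\le L$ and $\e_j\to 0$ should furnish the uniform density, monotonicity, and quasi-minimality estimates needed to run the regularity arguments of \cite{CiLeMaIC1} for almost-minimizers of anisotropic perimeter functionals; once this transfer is verified, the volume-fixing surgery and the application of Theorem \ref{hexagonal honeycomb thm torus quantitative small} are routine.
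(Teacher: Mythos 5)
Your argument is correct and matches the paper's own proof up to and including the application of Theorem \ref{thm main periodic}: the a-priori bounds $P(\E_\de)\le P(\H)+C\de$, the surgery producing a unit-area competitor at distance $O(\de)$, and the resulting bound $\d(\E_\de,v_\de+\H)^2\le C\de$ are exactly the paper's Steps one, two and the first half of Step four. Where you diverge is the final step, and there the proposal has a genuine gap.

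You propose to upgrade the $L^1$-bound to a Hausdorff bound by invoking the improved convergence theorem (Theorem \ref{thm improved convergence}) to produce a $C^{1,1}$-diffeomorphism $f_j:\pa\H\to\pa\E_j$ and then applying Theorem \ref{hexagonal honeycomb thm torus quantitative small}. This route does not work under the stated hypotheses, and the paper is explicit about this in the Introduction: since only $\Lip\Phi\le L$ is assumed, one does not expect $\pa\E$ to be a $C^{1,1}$ (or even $C^1$) diffeomorphic image of $\pa\H$, and adapting the improved convergence theorem of \cite{CiLeMaIC1} to anisotropic integrands would require, among other things, a description of singularities (junction structure) of anisotropic clusters that is not available at this regularity. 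You flag this as ``the main obstacle'' and then assert it ``should'' go through; but this is precisely the step that cannot be supplied with the tools at hand, and the bound $\hd\le C\sqrt{\e_j}$ you would obtain is in fact stronger than the theorem claims — an additional hint that the estimate is not reachable this way. Note also that your claim that $\E_j$ is a uniform $(\Lambda,r_0)$-minimizer of the \emph{Euclidean} perimeter is not justified: from $\PHI$-minimality and volume-fixing variations one only obtains $P(\E_j)\le P(\F)+C\e_j+\Lambda\,\d(\E_j,\F)$ for local competitors $\F$, i.e.\ an almost-minimality with an additive error $C\e_j$, which is not the hypothesis of Theorem \ref{thm improved convergence} and degrades at scales comparable to $\e_j$.

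The paper avoids the improved convergence theorem entirely in this proof. In its Step three it derives uniform lower area density estimates $|\E_\de(h)\cap B_{x,r}|\ge c_0 r^2$ for $x\in\pa\E_\de(h)$ and $r<r_1$, using only the $\PHI$-almost-minimality coming from volume-fixing variations (which is robust under the Lipschitz assumption on $\Phi$). In Step four, with $\d(\E_\de,v_\de+\H)^2\le C\de$ in hand, one picks a point $x\in\pa\E_\de(1)$ realizing $\hd$ and a ball $B_{x,r}$ entirely on one side of $\pa\H(1)$; the density estimate forces $c_0\,\dist(x,\pa\H(1))^2\le\d(\E_\de,\H)$, and a symmetric argument handles $\pa\H(1)\subset I_\e(\pa\E_\de(1))$. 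This loses a power (Hausdorff distance controlled by $\d^{1/2}$ rather than by $\d$), which is exactly the origin of the fourth power in \eqref{venerdi}. You would need to replace your final paragraph with this density-estimate argument to complete the proof.
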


We deduce Theorem \ref{thm pertub metric} from Theorem \ref{thm main periodic} by some comparison arguments and density estimates. Since we are assuming that $\nabla\Phi$ is merely bounded, we do not expect $\pa\E$ to be a $C^1$-diffeomorphic image of $\pa\H$. From this point of view,  \eqref{venerdi} seems to express a qualitatively sharp control on $\pa\E$. At the same time, when more regular integrands $\Phi$ are considered (see, e.g., \cite{DuzaarSteffen} for the kind of assumption one may impose here) one would expect to be able to obtain a control in the spirit of \eqref{spirit}. However a description of singularities of isoperimetric clusters in this kind of setting, although arguably achievable at least in some special cases, is missing at present. In turn, understanding singularities would be the essential in order to adapt the improved convergence theorem from \cite{CiLeMaIC1} to this context, and thus to be able to strengthen \eqref{venerdi} into an estimate analogous to \eqref{spirit}.

The paper is organized as follows. In section \ref{section indrei} we deduce from \cite{shilleto,indreinurbekyan} a quantitative isoperimetric inequality for polygons of possible independent interest. In section \ref{section small def} we prove Theorem \ref{thm main periodic} on small $C^1$-deformations of $\pa\H$ (actually with the Hausdorff distance between $\pa\E$ and $\pa\H$ in place of $\d(\E,\H)$ on the right-hand side of \eqref{hexagonal honeycomb thm torus quantitative}). In section \ref{section fine} we exploit the improved convergence theorem from \cite{CiLeMaIC1} to deduce Theorem \ref{thm main periodic} and Theorem \ref{thm pertub volumes} from Theorem \ref{hexagonal honeycomb thm torus quantitative small}, and, finally, to deduce Theorem \ref{thm pertub metric} from Theorem \ref{thm main periodic}.

\bigskip

\noindent {\bf Acknowledgement}: The work of MC was supported by the project 2010A2TFX2 ``Calcolo delle Variazioni'' funded by the Italian Ministry of Research and University. The work of FM was supported by NSF Grant DMS-1265910.

\section{A quantitative isoperimetric inequality for polygons}\label{section indrei} Thorough this section we fix $n\ge 3$. We denote by $\Pi$ a convex unit-area $n$-gon, and by $\Pi_0$ a reference unit-area regular $n$-gon. If $\ell$ and $r$ denote, respectively, the side-length and radius of $\Pi_0$, then one easily finds that
\[
P(\Pi_0)=n\,\ell=2\sqrt{n\,\tan\Big(\frac{\pi}n\Big)}\,,\qquad r^{-1}=\sqrt{n\,\sin\Big(\frac{\pi}n\Big)\,\cos\Big(\frac{\pi}n\Big)}\,.
\]
(Notice that in the other sections of the paper we always assume $n=6$, so that $\ell=(12)^{1/4}/3$ according to the convention set in the introduction.) The isoperimetric theorem for $n$-gons asserts that
\begin{equation}
  \label{isoperimetric inequality ngon}
  P(\Pi)\ge n\,\ell\,,
\end{equation}
with equality if and only if $\Pi=\rho(\Pi_0)$ for a rigid motion $\rho$ of $\R^2$. A sharp quantitative version of \eqref{isoperimetric inequality ngon} is proved in \cite{indreinurbekyan} starting from the main result in \cite{shilleto}. Precisely, let us now denote by $\ell_i$ and $r_i$ the lengths of the $i$th edge and the $i$th radius of $\Pi$ (labeled so that $\ell_i=\ell_j$ and $r_i=r_j$ if $i=j$ modulo $n$), and set
\[
\bar{\ell}=\frac1n\sum_{i=1}^n\ell_i\,,\qquad\bar{r}=\frac1n\sum_{i=1}^n r_i\,.
\]
Then \cite[Corollary 1.3]{indreinurbekyan} asserts that
\begin{equation}
  \label{isoperimetric inequality ngon indrei}
  C(n)\,\big(P(\Pi)^2-(n\ell)^2\big)\ge \sum_{i=1}^n (r_i-\bar{r})^2+\sum_{i=1}^n (\ell_i-\bar{\ell})^2\,.
\end{equation}
The right-hand side of inequality \eqref{isoperimetric inequality ngon indrei} measures the distance of $\Pi$ from being a unit-area regular $n$-gon in the sense that if $r_i=\bar r$ and $\ell_i=\bar\ell$, then it must be $\bar r=r$ and $\bar\ell=\ell$ by the area constraint, and thus $\Pi$ is a regular unit-area $n$-gon. However, in addressing our problem we shall need (in the case $n=6$) to control the distance of $\Pi$ from a specific regular unit-area $n$-gon by means of $P(\Pi)^2-(n\ell)^2$. Passing from \eqref{isoperimetric inequality ngon indrei} to this kind of control is the subject of the following proposition.

\begin{proposition}\label{lemma indrei}
There exists a positive constant $C(n)$ with the following property: for every convex unit-area $n$-gon $\Pi$ there exists a rigid motion $\rho$ of $\R^2$ such that
   \begin{equation}
    \label{quantitative indrei inq}
      C(n)\,\big(P(\Pi)^2-(n\ell)^2\big)\ge \hd(\pa \Pi,\pa \rho\Pi_0 )^2\,.
  \end{equation}
\end{proposition}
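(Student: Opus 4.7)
The plan splits into two regimes according to the size of the perimeter deficit $D:=P(\Pi)^2-(n\ell)^2\ge 0$. In the large-deficit regime, one uses that $\Pi$ has unit area, hence $\diam(\Pi)\le P(\Pi)/2$; placing $\rho\Pi_0$ with center at, say, the centroid of $\Pi$ gives the trivial bound $\hd(\pa\Pi,\pa\rho\Pi_0)\le \diam(\Pi)+2r$, so that $\hd^2\le C(n)(D+n^2\ell^2)$. Fixing a threshold $\e_0=\e_0(n)>0$, when $D\ge\e_0$ this already yields $\hd^2\le C(n,\e_0)\,D$ and one is done. The nontrivial case is thus $D<\e_0$, treated below.

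In the small-deficit regime, translate so that the centroid of $\Pi$ lies at the origin and write the vertices cyclically as $v_j = r_je^{\mathrm{i}\psi_j}$ with $\psi_1<\cdots<\psi_n<\psi_1+2\pi$; set $\theta_j=\psi_{j+1}-\psi_j$, so that $\sum_j\theta_j=2\pi$ and $\ell_j=|v_{j+1}-v_j|$. Inequality \eqref{isoperimetric inequality ngon indrei} yields $\max_j(|r_j-\bar r|+|\ell_j-\bar\ell|)\le C\sqrt D$. Since $P(\Pi)=n\bar\ell$ and $D=n^2(\bar\ell^2-\ell^2)$, one also has $|\bar\ell-\ell|\le C\sqrt D$, hence $\max_j|\ell_j-\ell|\le C\sqrt D$. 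For $\e_0$ small, the radii $r_j$ remain in a compact neighborhood of $r$, so the law of cosines
\[
\cos\theta_j = 1 - \frac{\ell_j^2}{2\,r_j\,r_{j+1}},
\]
combined with $\sum_j\theta_j=2\pi$ and the area identity $1=\tfrac12\sum_j r_jr_{j+1}\sin\theta_j$, can be linearized around the regular configuration ($r_j=r$, $\theta_j=2\pi/n$, $\ell_j=\ell$) to yield, by a direct implicit-function-type computation, the additional estimates $|\bar r-r|+\max_j|\theta_j-2\pi/n|\le C\sqrt D$.

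Let then $\rho$ be the rotation by $\psi_1$ about the origin, so that $\rho\Pi_0$ has vertices $v_j^*=r\,e^{\mathrm{i}(\psi_1+2\pi(j-1)/n)}$. Using $\psi_j-\psi_1-\tfrac{2\pi(j-1)}{n}=\sum_{k<j}(\theta_k-\tfrac{2\pi}{n})$ and the previous estimates,
\[
|v_j-v_j^*|\;\le\;|r_j-r|+r\,\Big|\psi_j-\psi_1-\tfrac{2\pi(j-1)}{n}\Big|\;\le\;C(n)\sqrt D.
\]
Since $\pa\Pi$ and $\pa\rho\Pi_0$ are polygonal curves joining consecutive (paired) vertices, and the Hausdorff distance between two line segments with matched endpoints is controlled by the maximum endpoint displacement, $\hd(\pa\Pi,\pa\rho\Pi_0)\le\max_j|v_j-v_j^*|\le C(n)\sqrt D$, which is \eqref{quantitative indrei inq}.

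The delicate step in this plan is the perturbative bound $|\bar r-r|+\max_j|\theta_j-2\pi/n|\le C\sqrt D$. Indeed, \eqref{isoperimetric inequality ngon indrei} only compares $r_j,\ell_j$ to their means $\bar r,\bar\ell$, and one must exploit both the area constraint and the angle-sum constraint to anchor these means (and the individual $\theta_j$) to the specific reference values $r$ and $2\pi/n$ of the regular polygon. This amounts to a quantitative invertibility statement for the linearization at the regular configuration, nondegenerate by direct inspection of the relevant Jacobian involving $\sin(2\pi/n)$ and $\cos(2\pi/n)$.
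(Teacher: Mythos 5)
Your proposal follows essentially the same route as the paper: reduce to the small-deficit regime, invoke the Indrei--Nurbekyan bound \eqref{isoperimetric inequality ngon indrei} to control $\ell_i-\bar\ell$ and $r_i-\bar r$, anchor the means $\bar\ell,\bar r$ to the reference values $\ell,r$ using the unit-area constraint, deduce that the central/interior angles are within $O(\sqrt{D})$ of $2\pi/n$, and finally transfer a vertex-by-vertex $O(\sqrt{D})$ bound to a Hausdorff-distance bound for the polygonal boundaries. The one place where the paper is more concrete than you are is precisely the step you flag as delicate: rather than invoking an abstract implicit-function/nondegenerate-Jacobian argument, the paper makes this quantitative by observing that the triangle-area function $A(a,a,\ell)=(\ell/4)\sqrt{4a^2-\ell^2}$ is bi-Lipschitz in $a$ near $a=r$ (which anchors $\bar r$ to $r$ from $\sum_i A(r_i,r_{i+1},\ell_i)=1=nA(r,r,\ell)$), and that the interior-angle function $f(a,b,c)=\arccos\bigl((a^2+b^2-c^2)/2ab\bigr)$ is Lipschitz near $(r,r,\ell)$; the paper also needs a preliminary compactness step to guarantee these triples stay in the relevant neighborhoods, which in your sketch is tacitly absorbed in the phrase ``the radii $r_j$ remain in a compact neighborhood of $r$.'' Two small slips worth noting: the law of cosines should read $\cos\theta_j=(r_j^2+r_{j+1}^2-\ell_j^2)/(2r_jr_{j+1})$, not $1-\ell_j^2/(2r_jr_{j+1})$ (though the discrepancy is $O(D)$ and therefore harmless here), and in fact the angle-sum constraint $\sum_j\theta_j=2\pi$ together with the corrected law of cosines already pins $\bar r$ to $r$, so appealing additionally to the area identity $1=\tfrac12\sum r_jr_{j+1}\sin\theta_j$ is redundant rather than necessary.
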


\begin{proof}
  Up to a translation, we can assume that $\Pi$ has barycenter at $0$. Next, if $P(\Pi)\ge n\ell+\eta\,P(\Pi)$ for some $\eta>0$, then $P(\Pi)^2-(n\ell)^2\ge \eta\,P(\Pi)^2$. Since $\hd(\pa\Pi,\pa\rho\Pi_0)<\diam(\Pi)+\diam(\Pi_0)\le (P(\Pi)+P(\Pi_0))/2\le P(\Pi)$ whenever $\pa\rho\Pi_0$ intersects $\pa\Pi$, we conclude that \eqref{quantitative indrei inq} holds with $C(n)=\eta^{-1}$. In other words, in proving \eqref{quantitative indrei inq}, one can assume without loss of generality that
  \begin{equation}\label{piccolezza}
  P(\Pi)-n\,\ell< \eta\,P(\Pi)\,
  \end{equation}
  for an arbitrarily small constant $\eta=\eta(n)$. By a trivial compactness argument (on the class of convex $n$-gons with barycenter at $0$), one sees that given $\e>0$ there exists $\eta>0$ such that if \eqref{piccolezza} holds, then, up to rigid motions,
  \begin{equation}
    \label{piccolezza hd}
    \hd(\pa \Pi, \pa \Pi_0)< \e\,,
  \end{equation}
  where the reference regular unit-area $n$-gon $\Pi_0$ is assumed to have barycenter at $0$.

  Now let $v_i$ and $w_i$ denote the positions of the vertexes of $\Pi$ and $\Pi_0$ respectively: by \eqref{piccolezza hd} and up to a rotation, one can entail that
  \[
  |v_i-w_i|<\e\,,\qquad\forall i=1,...,n\,,\qquad v_1=\l\,w_1\qquad\mbox{for some $\l>0$}\,.
  \]
  Let $\rho_i$ denote the rotation around the origin such that $\rho_i(v_i)=\l_i\,w_i$ for some $\l_i>0$ (so that $\rho_1=\Id$ by $v_1=\l\,w_1$), and let $\theta_i$ denote the angle identifying $\rho_i$ as a counterclockwise rotation; since $\|\rho_i-\Id\|\le|\theta_i|$ and $|\rho_i(v_i)-w_i|=|r_i-r|$, one has
  \begin{equation}
    \label{bene -1}
    \hd(\pa\Pi,\pa\Pi_0)\le C\,\sum_{i=1}^n\,|v_i-w_i|\le C\,\sum_{i=1}^n\,r_i|\theta_i|+|r_i-r|\,.
  \end{equation}
  Let us now set $\de=P(\Pi)-n\ell$: by \eqref{isoperimetric inequality ngon indrei} and \eqref{piccolezza} one finds
  \begin{equation}
    \label{bene 0}
  \max_{1\le i\le n}|r_i-\bar{r}|+|\ell_i-\bar\ell|\le C\,\sqrt{\de}\,.
  \end{equation}
  Since $\bar\ell=n^{-1}P(\Pi)$ gives $|\bar\ell-\ell|=n^{-1}\de$, we deduce from $|\ell_i-\bar\ell|\le C\sqrt\de$ that
  \begin{equation}
    \label{bene 1}
      \max_{1\le i\le n}|\ell_i-\ell|\le C\sqrt{\de}\,.
  \end{equation}
  Let now $A(a,b,c)$ denote the area of a triangle with sides of length $a$, $b$ and $c$. Since $A$ is a Lipschitz function in an $\e$-neighborhood of $(r,r,\ell)$ (where both $(\bar r,\bar r,\ell)$ and $(r_i,r_{i+1},\ell_i)$ lie by \eqref{piccolezza hd}), by \eqref{bene 0}, \eqref{bene 1} and by $|\Pi_0|=|\Pi|$ we find
  \[
  \Big|n\,A(r,r,\ell)-n\, A(\bar{r},\bar{r},\ell)\Big|
  =\Big|\sum_{i=1}^n A(r_i,r_{i+1},\ell_i)-n\, A(\bar{r},\bar{r},\ell)\Big|\le C\,\sqrt{\de}\,.
  \]
  Since $A(a,a,\ell)=(\ell/4)\,\sqrt{4a^2-\ell^2}$ we immediately see that $|A(r,r,\ell)-A(a,a,\ell)|\ge c\,|a-r|$ whenever $|a-r|<\e$ and where $c=c(\ell)=c(n)>0$. Thus, $|r-\bar{r}|\le C\,\sqrt{\de}$, and  \eqref{bene 0} and \eqref{bene 1} give
  \begin{equation}
    \label{bene 2}
    \max_{1\le i\le n}|r_i-r|+|\ell_i-\ell|\le C\,\sqrt{\de}\,.
  \end{equation}
  If $\a_i$ denotes the interior angle between $v_i$ and $v_{i+1}$ (so that $|\a_i-2\pi/n|=O(\e)$ by \eqref{piccolezza hd}), then
  \[
  \a_i=f(r_i,r_{i+1},\ell_i)\,,\qquad\mbox{where}\quad f(a,b,c)=\arccos\Big(\frac{a^2+b^2-c^2}{2ab}\Big)\,.
  \]
  Since $f$ is a Lipschitz function in an $\e$-neighborhood of $(r,r,\ell)$, we conclude from \eqref{bene 2} that
  \[
  \max_{1\le i\le n}\big|\a_i-\frac{2\pi}n\big|=\max_{1\le i\le n}\big|f(r_i,r_{i+1},\ell_i)-f(r,r,\ell)\big|\le C\,\sqrt\de\,.
  \]
  In particular, since $\theta_1=0$ (as $\rho_1=\Id$), we deduce from this last estimate that $|\theta_i|\le C\,\sqrt{\de}$ for $i=1,...,n$.
  We plug this inequality and \eqref{bene 2} in \eqref{bene -1} to conclude the proof.
\end{proof}

Coming to the torus $\T$, we shall use the following corollary of Proposition \ref{lemma indrei}.

\begin{corollary}\label{corollary indrei}
  There exist positive constants $\eta$ and $c$, independent from $\T$, with the following property. If $\Pi$ is a convex hexagon in $\T$ such that $\hd(\pa\Pi,\pa H)\le \eta$, then there exists a regular hexagon $H_*$ in $\T$ with $|\Pi|=|H_*|$
  \begin{equation}
    \label{precisi0}
      P(\Pi)-P(H)\sqrt{|\Pi|}\ge c\,\hd(\pa\Pi,\pa H_*)^2\,.
  \end{equation}
\end{corollary}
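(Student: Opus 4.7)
The plan is to lift $\Pi$ from $\T$ to $\R^2$, rescale to unit area, apply Proposition \ref{lemma indrei}, and then push the resulting regular hexagon back down to $\T$.

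Since $\hat H$ embeds in $\T$ under \eqref{occhio}, I would first fix a universal $\eta>0$ so small that the $\eta$-tubular neighborhood of $\pa\hat H$ in $\R^2$ projects injectively onto the $\eta$-tubular neighborhood of $\pa H$ in $\T$. The assumption $\hd(\pa\Pi,\pa H)\le\eta$ then lets one lift $\Pi$ uniquely to a convex hexagon $\tilde\Pi\subset\R^2$ with $\hd(\pa\tilde\Pi,\pa\hat H)\le\eta$, $|\tilde\Pi|=|\Pi|=:m$, and $P(\tilde\Pi)=P(\Pi)$; shrinking $\eta$ if necessary, one may also assume $m\in(1/2,2)$.

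Next, I would set $\tilde\Pi':=\tilde\Pi/\sqrt m$, a convex unit-area hexagon, and apply Proposition \ref{lemma indrei} (with $n=6$) to obtain a rigid motion $\rho$ of $\R^2$ with
\[
C(6)\,\bigl(P(\tilde\Pi')^2-P(\hat H)^2\bigr)\ge \hd(\pa\tilde\Pi',\pa\rho\hat H)^2.
\]
Because both sides scale with degree $2$ under $x\mapsto\sqrt m\,x$, multiplying through by $m$ gives the corresponding inequality for $\tilde\Pi$ and $\sqrt m\,\rho\hat H$. Then I would factor $P(\tilde\Pi)^2-m\,P(\hat H)^2=(P(\tilde\Pi)-\sqrt m\,P(\hat H))(P(\tilde\Pi)+\sqrt m\,P(\hat H))$ and bound the sum factor by a universal constant using $\hd(\pa\tilde\Pi,\pa\hat H)\le\eta$; using $P(H)=P(\hat H)$ and $|\Pi|=m$, this yields
\[
\hd(\pa\tilde\Pi,\pa\sqrt m\,\rho\hat H)^2\le C\,\bigl(P(\Pi)-\sqrt{|\Pi|}\,P(H)\bigr).
\]

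Finally, I would set $H_*$ equal to the image of $\sqrt m\,\rho(\hat H)\subset\R^2$ in $\T$. The bound just displayed, together with $\hd(\pa\tilde\Pi,\pa\hat H)\le\eta$ and $m$ close to $1$, forces $\pa\sqrt m\,\rho\hat H$ to lie inside the $\eta$-tubular neighborhood of $\pa\hat H$, so the projection to $\T$ is injective and $H_*$ is a regular hexagon in $\T$ with $|H_*|=m=|\Pi|$. Since the quotient map is distance non-increasing, the Hausdorff distance in $\T$ is dominated by the Hausdorff distance in $\R^2$, and \eqref{precisi0} follows. The only real obstacle is the bookkeeping needed to guarantee injectivity of both lifts simultaneously and to compare Hausdorff distances in $\T$ and $\R^2$; this is handled by the universal choice of $\eta$, which is also why the constants come out independent of $\T$, since every estimate is confined to a fixed neighborhood of $\pa\hat H$.
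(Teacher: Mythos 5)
Your proposal is correct and follows essentially the same route as the paper's own proof: lift $\Pi$ to a convex hexagon in $\R^2$, apply Proposition \ref{lemma indrei} after normalizing the area (the paper phrases this as a scale-invariant restatement of \eqref{quantitative indrei inq}, you derive it by rescaling to unit area and back — same content), factor the difference of squares and absorb the bounded sum factor into the constant, and then argue that the resulting regular hexagon stays inside a tubular neighborhood of $\hat H$ that projects injectively to $\T$ (which is exactly where \eqref{occhio} is used, as you note). The only cosmetic difference is that the paper observes the lift is an isometry on that neighborhood, giving equality $\hd(\pa\hat\Pi,\pa\hat H_*)=\hd(\pa\Pi,\pa H_*)$, whereas you use the weaker (but sufficient) fact that the quotient map is $1$-Lipschitz; both are sound.
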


\begin{proof}
  We first notice that by Proposition \ref{lemma indrei} and by scaling, if $\hat\Pi$ is a convex hexagon in $\R^2$, then there exists a regular hexagon $\hat H_*$ with $|\hat H_*|=|\hat\Pi|$ and
  \begin{equation}
    \label{precisi1}
      P(\hat\Pi)^2-P(\hat H)^2 |\hat\Pi|\ge c\,\hd(\pa\hat\Pi,\pa \hat H_*)^2\,.
  \end{equation}
  Since $\Pi$ is a convex hexagon in $\T$ with $\hd(\pa\Pi,\pa H)\le\eta$, then there exists a convex hexagon $\hat\Pi$ in $\R^2$ isometric to $\Pi$ with $\hd(\pa\hat\Pi,\pa\hat H)\le\eta$. In particular, for some constant $C$ independent from $\T$, one has
  \[
  P(\hat\Pi)-P(\hat H)\sqrt{|\hat\Pi|}\le C\,\eta\,,\qquad P(\hat\Pi)+P(\hat H)\sqrt{|\hat\Pi|}\le C\,,
  \]
  and thus \eqref{precisi1} gives, up to further decrease the value of $c$,
  \begin{equation}
    \label{precisi2}
      C\eta\ge P(\hat\Pi)-P(\hat H)\sqrt{|\hat\Pi|}\ge c\,\hd(\pa\hat\Pi,\pa \hat H_*)^2\,.
  \end{equation}
  By \eqref{precisi2} and $\hd(\pa\hat\Pi,\pa\hat H)\le\eta$ we have $\hd(\pa\hat H,\pa\hat H_*)\le C\sqrt\eta$. Now, since $\beta\ge 2$ and $\a$ is even one can find $\eta_*>0$ (independent of $\a$ and $\beta$) such that $I_{\eta_*}(\hat H)=\{x\in\R^2:\dist(x,\hat{H})\le\eta_*\}$ is compactly contained into a rectangular box of height $3\ell\a/2$ and width $\sqrt{3}\ell\beta$. As a consequence, if $\hat{J}$ is a polygon contained in $I_{\eta_*}(\hat H)$, then $J=\hat{J}/_\approx\subset\T$ is isometric to $\hat J$. Thus, if $C\sqrt\eta<\eta_*$, then $H_*=\hat H_*/_\approx$ is a regular hexagon in $\T$ with $|H_*|=|\Pi|$ and $\hd(\pa\hat\Pi,\pa\hat H_*)=\hd(\pa\Pi,\pa H_*)$, and \eqref{precisi0} follows from \eqref{precisi2}.
\end{proof}

\section{Small deformations of the reference honeycomb}\label{section small def} The main result of this section is Theorem \ref{hexagonal honeycomb thm torus quantitative small}, which provides us, on a restricted class of unit-area tilings, with a stronger stability estimate than the one in Theorem \ref{thm main periodic}. Before stating this result we need to introduce the following terminology:

\medskip

\noindent {\bf Regular and singular sets:} Given a $N$-tiling $\E$ of $\T$ one sets
\[
\pa\E=\bigcup_{h=1}^N\pa\E(h)\,,\qquad\pa^*\E=\bigcup_{h=1}^N\pa^*\E(h)\,,
\]
\[
\S(\E)=\pa\E\setminus\pa^*\E\,,
\qquad[\pa\E]_\mu=\{x\in\pa\E:\dist(x,\S(\E))>\mu\}\,,\quad\mu>0\,,
\]
where $\pa^*E$ denotes the reduced boundary of a set of finite perimeter $E$ in $\T$, and where the normalization convention $\pa E=\ov{\pa^*E}$
for sets of finite perimeter is always assumed to be in force, see \cite[Section 12.3]{maggiBOOK}. We call $\pa^*\E$ and $\S(\E)$ the {\it regular set} and the {\it singular set} of $\pa\E$ respectively. In this way, $\pa^*\H$ and $\S(\H)$ are, respectively, the union of the open edges and the union of the vertexes of the hexagons $\H(h)$ for $h=1,...,N$.

\medskip

\noindent {\bf Tilings and maps of class $C^{k,\a}$:} Given $k\in\N$ and $\a\in[0,1]$, one says that a tiling $\E$ of $\T$ is of {\it class $C^{k,\a}$} if there exist a finite family $\{\g_i\}_{i\in I}$ of compact $C^{k,\a}$-curves with boundary and a finite family $\{p_j\}_{j\in J}$ of points such that
\begin{equation}
  \label{class C1}
  \pa\E=\bigcup_{i\in I}\g_i\,,\qquad\pa^*\E=\bigcup_{i\in I}\INT(\g_i)\,,\qquad\S(\E)=\bigcup_{i\in I}\bd(\g_i)=\bigcup_{j\in J}\{p_j\}\,,
\end{equation}
where $\INT(\g_i)$ and $\bd(\g_i)$ denote the interior and the boundary of $\g_i$ respectively. Moreover, given a function $f:\pa\E\to\T$, one says that $f\in C^{k,\a}(\pa\E;\T)$ if $f$ is continuous on $\pa\E$ and
\[
\|f\|_{C^{k,\a}(\pa\E)}:=\sup_{i\in I}\|f\|_{C^{k,\a}(\g_i)}<\infty\,.
\]
Finally, given two $C^{k,\a}$-tilings $\E$ and $\F$ of $\T$, one says that $f$ is a $C^{k,\a}$-diffeomorphism between $\pa\E$ and $\pa\F$ if $f$ is an homeomorphism between $\pa\E$ and $\pa\F$ with $f(\S(\E))=\S(\F)$, $f(\pa\E(h))=\pa\F(h)$ for every $h=1,...,N$, $f\in C^{k,\a}(\pa\E;\T)$ and $f^{-1}\in C^{k,\a}(\pa\E;\T)$.

\medskip

\noindent {\bf Tangential component of a map and $(\e,\mu,L)$-perturbations of $\H$:} Given a tiling $\E$ of $\T$ of class $C^1$, by taking \eqref{class C1} into account one can define $\nu_\E\in C^{0}(\pa^*\E;S^1)$ in such a way that $\nu_\E$ is a unit normal vector to $\g_i$ for every $i$. Correspondingly, given a map $f:\pa\E\to\T$, we define $\ttau_\E f:\pa^*\E\to\T$, the tangential component of $f$ with respect to $\pa\E$, as
\[
\ttau_\E f(x)=f(x)-(f(x)\cdot\nu_\E(x))\,\nu_\E(x)\,,\qquad x\in\pa^*\E\,.
\]
Finally, one says that $\E$ is an {\it $(\e,\mu,L)$-perturbation of $\H$} if $\E$ is of class $C^{1,1}$ and there exists an homeomorphism $f$ between $\pa\H$ and $\pa\E$ with
\begin{eqnarray}\label{eL perturbation 2}
  \|f\|_{C^{1,1}(\pa\H)}&\le&L\,,
  \\\nonumber
  \|f-\Id\|_{C^1(\pa\H)}&\le&\e\,,
  \\\nonumber
  \|\ttau_{\H}(f-\Id)\|_{C^1(\pa^*\H)}&\le&\frac{L}\mu\,\sup_{\S(\H)}|f-\Id|\,,
    \\\nonumber
  \ttau_{\H}(f-\Id)=0\,,&&\qquad\mbox{on $[\pa\H]_\mu$}\,.
\end{eqnarray}

\begin{theorem}
  \label{hexagonal honeycomb thm torus quantitative small}
  For every $L>0$ there exist positive constants $\mu_0$, $\e_0$ and $c_0$ (depending on $L$ and $|\T|$), $C$ depending on $|\T|$ only, and $C'$ depending on $L$ only, with the following property. If $\E$ is a unit-area $(\e_0,\mu_0,L)$-perturbation of $\H$, then there exists $v\in\R^2$ such that
  \begin{equation}
    \label{stabilita hd}
      P(\E)-P(\H)\ge c_0\,\hd(\pa\E,v+\pa\H)^2\,,\qquad |v|\le C\,\e_0\,.
  \end{equation}
  Moreover, there exists a $C^{1,1}$-diffeomorphism $f_0$ between $v+\pa\H$ and $\pa\E$ such that
  \begin{equation}
    \label{stabilita f C1}
      P(\E)-P(\H)\ge c_0\,\Big(\|f_0-\Id\|_{C^0(v+\pa\H)}^2+\|f_0-\Id\|_{C^1(v+\pa\H)}^4\Big)\,,
  \end{equation}
  and $\|f_0\|_{C^{1,1}(v+\pa\H)}\le C'$.
\end{theorem}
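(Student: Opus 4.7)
The plan follows the roadmap sketched after Theorem \ref{thm main periodic}: apply a quantitative isoperimetric inequality chamber-by-chamber to show that each $\E(h)$ is close to some regular unit-area hexagon $H_h\subset\T$, then prove a combinatorial rigidity statement forcing the family $\{H_h\}_{h=1}^N$ to coincide with a single translate $v+\H$ up to error controlled by $P(\E)-P(\H)$, and finally upgrade the Hausdorff estimate \eqref{stabilita hd} to the $C^1$-estimate \eqref{stabilita f C1} by interpolating against the $C^{1,1}$-bound built into the $(\e_0,\mu_0,L)$-perturbation hypothesis.

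For the chamber-wise step, the $(\e_0,\mu_0,L)$-perturbation assumption ensures that each $\E(h)$ is a topological disk bounded by six $C^{1,1}$-arcs $\g_i^h$ meeting at six corners $v_i^h$ close to the vertices of $\H(h)$. Replacing each arc by its chord yields an inscribed convex hexagon $\Pi_h$ with $\bigl| |\E(h)|-|\Pi_h|\bigr|\le C\,\e_0^{\,2}$, and a standard arc-versus-chord expansion gives
\[
P(\E(h))-P(\Pi_h)\ge c\sum_{i=1}^6\hd(\g_i^h,[v_i^h,v_{i+1}^h])^2\,.
\]
Applying Corollary \ref{corollary indrei} to $\Pi_h$ (together with Lemma \ref{lemma hales} to absorb the small volume adjustment from $|\Pi_h|$ to unit area) produces a regular unit-area hexagon $H_h\subset\T$ satisfying $P(\E(h))-P(H)\ge c\,\hd(\pa\E(h),\pa H_h)^2$. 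Summing over $h$,
\[
P(\E)-P(\H)\ge c\sum_{h=1}^N\hd(\pa\E(h),\pa H_h)^2\,.
\]

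For the rigidity step, let $v_h\in\R^2$ denote the displacement of the centre of $H_h$ from that of $\H(h)$. Whenever $h,h'$ are adjacent in $\H$, the chambers $\E(h)$ and $\E(h')$ share a common arc, so $H_h$ and $H_{h'}$ must share the corresponding edge up to error $O(\hd(\pa\E(h),\pa H_h)+\hd(\pa\E(h'),\pa H_{h'}))$. Since two regular unit-area hexagons sharing an edge on opposite sides differ by a fixed honeycomb translation, a perturbation computation yields $|v_h-v_{h'}|\le C(\hd(\pa\E(h),\pa H_h)+\hd(\pa\E(h'),\pa H_{h'}))$ on each adjacent pair. Setting $v=N^{-1}\sum_h v_h$ and applying a discrete Poincar\'e-type inequality on the finite hexagonal adjacency graph of $\T$,
\[
\max_{1\le h\le N}|v_h-v|^2\le C\sum_{h=1}^N\hd(\pa\E(h),\pa H_h)^2\le C\,(P(\E)-P(\H))\,,
\]
which combined with the chamber-wise bound proves \eqref{stabilita hd}; the estimate $|v|\le C\,\e_0$ follows from $|v_h|=O(\e_0)$ for every $h$.

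Finally, the $C^{1,1}$-diffeomorphism $f_0:v+\pa\H\to\pa\E$ is obtained by composing the parametrization $f$ provided by \eqref{eL perturbation 2} with a projection-type correction accounting for the translation $v$, so that $\|f_0\|_{C^{1,1}}\le C'(L)$ is preserved. The $C^0$-part of \eqref{stabilita f C1} is essentially \eqref{stabilita hd}, and the $C^1$-part follows from the standard interpolation $\|g\|_{C^1}\le C\,\|g\|_{C^0}^{1/2}\|g\|_{C^{1,1}}^{1/2}$ applied to $g=f_0-\Id$. The main obstacle is the rigidity step: the inscribed hexagons produced chamber-by-chamber are not automatically compatible, and propagating the ``adjacent hexagons differ by a honeycomb translation'' information across $\T$ without accumulating errors around the periodic identifications (which preclude any natural inductive ``starting point'') is precisely what forces the use of the Poincar\'e inequality on the adjacency graph and introduces the $\T$-dependence of the constant $c_0$.
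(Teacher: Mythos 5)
Your overall roadmap --- chamber-wise quantitative isoperimetry, a rigidity step to fit the $N$ approximate hexagons into a single translate of $\H$, and then an interpolation to upgrade from $C^0$ to $C^1$ --- matches the paper's strategy, and the chamber-wise step is essentially Lemma \ref{lemma hales} plus Corollary \ref{corollary indrei}. However the rigidity step, which you yourself flag as the main obstacle, has a genuine gap that the discrete Poincar\'e inequality does not fix.

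You define $v_h$ as the displacement of the centre of $H_h$ from that of $\H(h)$ and assert $|v_h-v_{h'}|\le C(\hd(\pa\E(h),\pa H_h)+\hd(\pa\E(h'),\pa H_{h'}))$ whenever $h,h'$ are adjacent. This is false unless you also control the orientation of $H_h$. If $H_h$ is rotated by an angle $\theta$, two regular unit-area hexagons sharing an edge have centres that differ by the \emph{rotated} honeycomb vector $R_\theta e$ rather than $e$, so $v_{h'}-v_h=(R_\theta-\Id)e+O(\hd_h+\hd_{h'})$; the term $(R_\theta-\Id)e$ has size $\sim|\theta|$, and at this stage the only available bound is $|\theta|\le C\e_0$, not $O(\sqrt{\de})$. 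Even if the edge inequality did hold, a bound on $\max_h|v_h-v|$ alone would not give $\hd(\pa H_h,\pa(v+\H(h)))\le C\sqrt\de$ without separately controlling $\theta$. The paper's Step three is precisely the argument you are missing: it reflects $K_1$ across successive ``vertical'' edges along one full row of $\beta$ hexagons, and since this chain must close up under the periodic identifications, the mismatch $\hd(\pa J_*,\pa J_1)\gtrsim|\theta|$ must also be $\le C\sqrt\de$, whence $|\theta|\le C\sqrt\de$. Only after the common rotation angle is killed can one conclude $\hd(\S(\E),\S(v+\H))\le C\sqrt\de$.

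There is a second issue in your final step. You claim ``the $C^0$-part of \eqref{stabilita f C1} is essentially \eqref{stabilita hd}''; but the inequality between these two quantities goes the other way --- one always has $\|f_0-\Id\|_{C^0(v+\pa\H)}\ge\hd(\pa\E,v+\pa\H)$, while $f_0$ could a priori slide points far along $\pa\E$ tangentially with a large $C^0$ norm and no effect on the Hausdorff distance. The paper obtains the $C^0$ bound by a separate argument (the second Step four): a quadratic Taylor expansion of $P(\E)-P(\H)$ along $f_0$ whose first-order term vanishes because the three edges meeting at each vertex do so at $120$-degree angles (the identity $\sum_{i:p_j\in\bd(\s_i)}v_j^i=0$), combined with the tangential-displacement control $\|\ttau_\H(f_0-\Id)\|_{C^1}\le C\sup_{\S(\H_0)}|f_0-\Id|\le C\sqrt\de$ that the $(\e,\mu,L)$-perturbation structure plus the singular-set estimate make available. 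Without this, the $L^1$-type bound $\int|\nabla^{\s_i}(f_0-\Id)|\le C\sqrt\de$ (and hence the $C^0$ bound) does not follow, and your subsequent interpolation to $\|f_0-\Id\|_{C^1}^4\lesssim\de$ has no input. The interpolation inequality itself is fine; the missing piece is the $C^0$-level control, which requires both the rigidity argument above and the perimeter expansion.
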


We premise a lemma to the proof of Theorem \ref{hexagonal honeycomb thm torus quantitative small}. As said, this lemma provides a quantitative version of (a particular case of) Hales' hexagonal isoperimetric inequality, the key step in the proof of \eqref{hexagonal honeycomb thm torus} in \cite{hales}.

\begin{lemma}\label{lemma hales}
  There exist positive constants $\e_1$ and $c_1$ with the following property. If $\E$ is a unit-area tiling of $\T$ such that there exists an homeomorphism $f$ between $\pa\H$ and $\pa\E$ with $\|f-\Id\|_{C^0(\pa\H)}\le\e_1$, if $E=\E(h)$ for some $h\in\{1,...,N\}$ and $\Pi$ is the convex envelope of $\S(\E)\cap\pa E$ (so that $\Pi$ is convex hexagon with set of vertexes $\S(\E)\cap\pa E$ provided $\e_1$ is small enough), then there exists a regular hexagon $H_*$ with $|H_*|=|\Pi|$ such that
  \begin{equation}
    \label{utile}
    P(E)\ge P(H)+\frac{P(H)}2\,(|\Pi|-|E|)+c_1\,\Big(|E\Delta\Pi|^2+\hd(\pa\Pi,\pa H_*)^2\Big)\,.
  \end{equation}
\end{lemma}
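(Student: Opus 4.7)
The plan is to chain three estimates: (i) $P(E) \ge P(\Pi) + c_2\,|E\Delta\Pi|^2$ via the sharp arc-versus-chord isoperimetric inequality; (ii) $P(\Pi) \ge P(H)\sqrt{|\Pi|} + c'\,\hd(\pa\Pi,\pa H_*)^2$ via Corollary \ref{corollary indrei}; and (iii) $P(H)\sqrt{|\Pi|} \ge P(H) + \tfrac{P(H)}{2}(|\Pi|-|E|) - c_T\,|E\Delta\Pi|^2$ via a second-order Taylor expansion of $s\mapsto\sqrt{s}$ at $s=1$ (using $|E|=1$). Summing these yields the desired estimate with $c_1=\min\{c_2-c_T,\,c'\}$, provided $c_2>c_T$.

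\textbf{Setup and step (i).} I would first observe that $\|f-\Id\|_{C^0(\pa\H)}\le\e_1$ forces $\hd(\pa E,\pa H)=O(\e_1)$, and hence, via the convex-hull construction of $\Pi$, both $\hd(\pa\Pi,\pa H)=O(\e_1)$ and $||\Pi|-1|=O(\e_1)$. For $\e_1$ small one may then assume the six points $v_1,\dots,v_6\in\S(\E)\cap\pa E$ are in convex position and that each arc $\gamma_i\subset\pa E$ joining consecutive $v_i,v_{i+1}$ stays on one side of the corresponding chord $c_i=[v_i,v_{i+1}]$ of $\pa\Pi$. Letting $R_i$ be the region enclosed by $\gamma_i\cup c_i$, and setting $\epsilon_i=+1$ or $-1$ according as $R_i\subset E\setminus\Pi$ or $R_i\subset\Pi\setminus E$, the $R_i$'s are pairwise disjoint and
\[
|E|-|\Pi|=\sum_{i=1}^6\epsilon_i\,|R_i|,\qquad |E\Delta\Pi|=\sum_{i=1}^6|R_i|.
\]
The sharp isoperimetric inequality for rectifiable curves joining $v_i,v_{i+1}$ and enclosing area $|R_i|$ with the chord $c_i$ (with circular arcs as minimizers) gives, for $|R_i|$ small,
\[
\H^1(\gamma_i)\;\ge\; \ell_i+\frac{6\,|R_i|^2}{\ell_i^3}-O\!\left(\frac{|R_i|^3}{\ell_i^5}\right),\qquad \ell_i:=|v_{i+1}-v_i|.
\]
Summing, using $\ell_i=\ell(1+O(\e_1))$ with $\ell=(12)^{1/4}/3$, and then Cauchy-Schwarz in the form $\sum|R_i|^2\ge\tfrac16|E\Delta\Pi|^2$, yields (i) with $c_2=\ell^{-3}(1-O(\e_1))$.

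\textbf{Steps (ii)-(iii) and sign check.} For (ii), $\Pi$ is convex and $\hd(\pa\Pi,\pa H)=O(\e_1)$, so Corollary \ref{corollary indrei} directly produces $H_*$ and $c'$. For (iii), I would Taylor expand $\sqrt{1+t}$ with $t=|\Pi|-1=O(\e_1)$: the explicit integral remainder yields $\sqrt{1+t}\ge 1+t/2-c_T'\,t^2$ with $c_T'\to 1/8$ as $\e_1\to 0$; combining with $|E|=1$ and the elementary bound $(|\Pi|-|E|)^2\le|E\Delta\Pi|^2$ gives (iii) with $c_T=c_T'\,P(H)$. Adding (i)-(iii) produces
\[
P(E)\;\ge\;P(H)+\tfrac{P(H)}{2}(|\Pi|-|E|)+(c_2-c_T)\,|E\Delta\Pi|^2+c'\,\hd(\pa\Pi,\pa H_*)^2,
\]
and the lemma follows. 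The main obstacle, and the reason these particular inequalities close, is the strict-sign check $c_2>c_T$: with $\ell=(12)^{1/4}/3$ and $P(H)=2(12)^{1/4}$ one finds $c_2\to\ell^{-3}\approx 4.2$ and $c_T\to P(H)/8\approx 0.46$ as $\e_1\to 0$, so the quadratic gain from the arc-versus-chord step dominates the Taylor loss with ample slack; the delicate part is to track uniformly the $O(|R_i|^3/\ell_i^5)$ and $O(\e_1)$ remainders in both expansions so that this dominance persists for every $\e_1$-small perturbation covered by the lemma.
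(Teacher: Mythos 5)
Your proposal follows the same three-step strategy as the paper: (i) a per-side arc-versus-chord (Dido) estimate yielding $P(E)\ge P(\Pi)+c\,|E\Delta\Pi|^2$, (ii) Corollary~\ref{corollary indrei} to pick up the $\hd(\pa\Pi,\pa H_*)^2$ term, and (iii) a Taylor expansion of $\sqrt{|\Pi|}$ about $|E|=1$, absorbing the quadratic loss into the quadratic gain from (i). The only packaging difference in step (i) is that you expand $\arc_{\ell_i}$ on each side and then apply Cauchy--Schwarz $\sum a_i^2\ge\tfrac16(\sum a_i)^2$, whereas the paper first passes through Jensen's inequality to obtain the single ``chordal isoperimetric inequality'' $P(E)\ge P(\Pi)\,\arc(|E\Delta\Pi|/P(\Pi))$ and then invokes $\arc(a)\ge 1+\eta a^2$ once. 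You are also right that the crux is the numerical check that the arc gain dominates the Taylor loss: the paper's wording ``for $\tau$ small enough'' glosses over the fact that the remainder in $\sqrt{1-s}\ge 1-s/2-\tau s^2$ forces $\tau\gtrsim 1/8$, so what is actually used is $c_1>P(H)/8$, which both constructions comfortably satisfy.

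One technical flaw to repair: you cannot assume each arc $\gamma_i\subset\pa E$ lies on one side of the chord $c_i$. An $\e_1$-small $C^0$-perturbation of $\pa\H$ can oscillate across a chord any number of times, so your regions $R_i$ need not carry a single sign $\epsilon_i$, and the identities $|E|-|\Pi|=\sum\epsilon_i|R_i|$ and the single-region Dido bound are not literally available. The paper sidesteps this by taking $a_i$ to be the \emph{total} area enclosed between the $i$th side of $\Pi$ and the $i$th side of $E$; then $|E\Delta\Pi|=\sum a_i$ and $||E|-|\Pi||\le|E\Delta\Pi|$ still hold, Dido applies to total area, and nothing else changes. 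This is a small fix but worth making explicit, since it is the only place your draft asserts something that is not actually true for the class of tilings the lemma covers.
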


\begin{remark}
  {\rm The constants $\e_1$ and $c_1$ will just depend on the metric properties of the unit-area hexagon. In particular they do not depend on $\T$.}
\end{remark}

\begin{proof}
  [Proof of Lemma \ref{lemma hales}]
  Let $\arc_t(a)$ denote the length of a circular arc that bounds an area $a\ge0$ and whose chord length is $t>0$, and let us set $\arc(a)=\arc_1(a)$. In this way, $\arc:[0,\infty)\to[1,\infty)$ is an increasing function. Since the derivative of $\arc$ at $a$ is the curvature of any circular arc bounding an area $a$ above a unit length chord, and since this curvature is increasing as $a$ ranges from $0$ to $\pi/8$ (the value $a=\pi/8$ corresponds to the case of an half-disk with unit diameter), we conclude that $\arc$ is convex on $[0,\pi/8]$ (and, in fact, also concave on $[\pi/8,\infty)$). Moreover, a Taylor expansion gives that $\arc''(0^+)>0$: hence there exists $\eta>0$ such that
  \begin{equation}
    \label{arc coercivo}
    \arc(a)\ge 1+\eta\,a^2\,,\qquad\forall a\in[0,\eta)\,.
  \end{equation}
  Let $\ell_i$ denote the length of the $i$th side of $\Pi$, and let $a_i$ denote the total area enclosed between the $i$th side of $\Pi$ and the $i$th side of $E$;
  \begin{figure}
    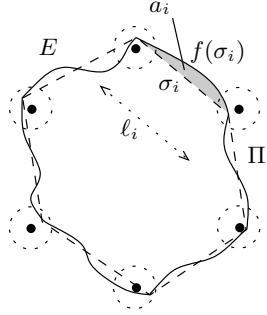\caption{{\small The convex hexagon $\Pi$ spanned by $\S(\E)\cap\pa E$. The vertexes of $\Pi$ are $\e_1$-close to the vertexes of the unit-area regular hexagon $\H(h)$ (as $E=\E(h)$ and $f(\pa\H(h))=\pa\E(h)$) which are depicted as black dots. The boundaries of $\Pi$ and $E$ are depicted, respectively, by a dashed line and by a continuous line.}}\label{fig pi}
  \end{figure}
  see Figure \ref{fig pi}. (If $\s_i$ is the $i$th side of $\Pi$, then the $i$th side of $E$ is a small $C^0$-deformation of $\s_i$ with fixed end-points). Noticing that $\arc_t(a)=t\,\arc(a/t^2)$, by Dido's inequality we find that
  \[
  P(E)\ge\sum_{i=1}^6\arc_{\ell_i}(a_i)=\sum_{i=1}^6\,\ell_i\,\arc\Big(\frac{a_i}{\ell_i^2}\Big)\,.
  \]
  By $\|f-\Id\|_{C^0(\pa\H)}\le \e_1$ and provided $\e_1\le 1$, one has
    \begin{equation}
    \label{omega}
    \hd(\pa\Pi,\pa\H(h))\le \e_1\,,\qquad \max_{1\le i\le 6}\Big\{a_i,\Big|\ell_i-\frac{P(H)}6\Big|\Big\}\le C\,\e_1\,,
  \end{equation}
  where a possible value for $C$ in \eqref{omega} is $2(\pi+\ell)$. By \eqref{omega}, by further decreasing $\e_1$, we can assume that $a_i/\ell_i^2\in[0,\pi/8]$ for every $i=1,...,6$. We thus apply Jensen inequality to find that
  \[
  P(E)\ge \sum_{i=1}^6\,\ell_i\,\arc\bigg(\frac1{\sum_{i=1}^6\ell_i}\sum_{i=1}^6\frac{a_i}{\ell_i^2}\bigg)\,.
  \]
  Since $P(H)/6=(12)^{1/4}/3<1$, by \eqref{omega} we may further assume that $\ell_i\le 1$ for every $i=1,...,6$, and thus conclude by $P(\Pi)=\sum_{i=1}^6\ell_i$, $|E\Delta\Pi|=\sum_{i=1}^6a_i$, and the monotonicity of $\arc$ that
  \begin{equation}
    \label{chordal isoperimetric inequality}
    P(E)\ge P(\Pi)\,\arc\Big(\frac{|E\Delta \Pi|}{P(\Pi)}\Big)\,.
  \end{equation}
  (Inequality \eqref{chordal isoperimetric inequality} is clearly related to the {\it chordal isoperimetric inequality} \cite[Proposition 6.1-A]{hales}, see also \cite[15.5]{Morgan}.) By \eqref{arc coercivo}, \eqref{omega} and \eqref{chordal isoperimetric inequality},
  \begin{equation}
    \label{chordal isoperimetric inequality x}
    P(E)\ge P(\Pi)+\eta\,\frac{|E\Delta\Pi|^2}{P(\Pi)^2}\ge P(\Pi)+c_1\,|E\Delta\Pi|^2\,,
  \end{equation}
  where $c_1>0$. Provided $\e_1$ is small enough, by \eqref{omega} we can apply Corollary \ref{corollary indrei} to find a regular hexagon $H_*$ with $|H_*|=|\Pi|$ and
  \[
    P(\Pi)- P(H)\sqrt{|\Pi|}\ge c\,\hd(\pa\Pi,\pa H_*)^2\,.
  \]
  Thus, up to further decrease the value of $c_1$,  \eqref{chordal isoperimetric inequality x} gives
  \begin{equation}
    \label{proof1}
    P(E)\ge P(H)\sqrt{|\Pi|}+c_1\,\Big(\hd(\pa\Pi,\pa H_*)^2+|E\Delta\Pi|^2\Big)\,.
  \end{equation}
  Finally, given $\tau>0$ let $\l>0$ be such that $\sqrt{1-s}\ge1-(s/2)-\tau\,s^2$ for $|s|<\l$: up to further decrease $\e_1$, by $\|f-\Id\|_{C^0(\pa\H)}\le\e_1$ we entail $|\s|<\l$ for $\s=|E|-|\Pi|$, and thus deduce with the aid of \eqref{proof1}  and $|E|=1$ that
  \begin{equation}
    \label{proof3}
    P(E)\ge P(H)-\frac{P(H)}2\,\s-P(H)\tau\,\s^2
    +c_1\,\Big(\hd(\pa\Pi,\pa H_*)^2+|E\Delta\Pi|^2\Big)\,.
  \end{equation}
  Since $|\s|=||E|-|\Pi||\le|E\Delta\Pi|$, for $\tau$ small enough depending from $c_1$, we prove \eqref{utile}.
\end{proof}

\begin{proof}[Proof of Theorem \ref{hexagonal honeycomb thm torus quantitative small}] {\it Step one}: The reflection of $\R^2$ with respect to a generic line does not induce a map on $\T$. However, by \eqref{occhio}, one has that if $R_\theta\hat H$ denotes the counterclockwise rotation of $\hat H$ by an angle $\theta$ around the origin, then $R_\theta\hat H$ is compactly contained in a box of height $3\ell\a/2\ge3\ell$ and width $\sqrt{3}\ell\beta\ge2\sqrt{3}\ell$ for every $\theta$. As a consequence, given a unit-area regular hexagon $K$ in $\T$, all the rotations of $K$ are well-defined as unit-area regular hexagons in $\T$; in particular, it always makes sense to define the reflection $g_\s(K)$ of $K$ with respect to an edge $\s$ of $K$. Taking this into account, we notice that there exist positive constants $\eta$ and $C$ (independent of $\T$) such that, if $K$ and $K'$ are unit-area regular hexagons in $\T$, and if $\s$ and $\s'$ are edges of $K$ and $K'$ respectively, then
\[
\left\{
\begin{array}
  {l l}
  \hd(\s,\s')\le\eta\,,
  \\
  |K\Delta K'|\ge 2-\eta\,,
\end{array}
\right .
\qquad\Rightarrow\qquad
\hd(\pa g_\s(K),\pa K')\le C\,\hd(\s,\s')\,.
\]
This geometric remark is going to be repeatedly used in the following arguments, where we shall denote by $\e_1$ and $c_1$ the constants of  Lemma \ref{lemma hales} and set $\de=P(\E)-P(\H)$. We notice that, by the area formula and since $\|f-\Id\|_{C^1(\pa\H)}\le\e_0$, one has
\begin{equation}
  \label{delta eps0}
  \de\le C\,P(\H)\,\e_0^2\,,
\end{equation}
where $C$ is independent from $\T$ and where $P(\H)=|\T|\,P(H)/2$.

\medskip

\noindent  {\it Step two}: We claim that, if $\e_0$ is small enough depending only from $|\T|$, and if $\Pi_h$ denotes the convex envelope of $\pa\E(h)\cap\S(\E)$ (so that $\Pi_h$ is a convex hexagon, not necessarily with unit-area), then for every $h=1,...,N$ there exists a regular unit-area hexagon $K_h$ such that
\begin{eqnarray}
  \label{trombone}
  \hd(\pa\Pi_h,\pa K_h)\le C\,\sqrt\de\,,&&
  \\
  \label{trombone 2}
  |K_h\Delta K_{h+1}|\ge 2-C\,\sqrt\de\,,&&
\end{eqnarray}
where here and in the rest of this step, $C$ denotes a constant depending from $|\T|$ only. Indeed, since $\{\Pi_h\}_{h=1}^N$ is a partition of $\T$, one has $\sum_{h=1}^N|\Pi_h|=|\T|=\sum_{h=1}^N|\E(h)|$. By requiring $\e_0\le\e_1$ we can apply Lemma \ref{lemma hales} to each $\E(h)$ in order to find regular hexagons $H^*_h$ with $|H^*_h|=|\Pi_h|$ such that, by adding up \eqref{utile} on $h$, one finds
\begin{eqnarray}\label{pino}
  2\,\de=\sum_{h=1}^N (P(\E(h))-P(H))\ge c_1\,\sum_{h=1}^N\Big(|\E(h)\Delta\Pi_h|^2+\hd(\pa\Pi_h,\pa H^*_h)^2\Big)\,.
\end{eqnarray}
By \eqref{pino},
\begin{equation}
  \label{esagoniiii}
  ||\Pi_h|-1|\le|\E(h)\Delta \Pi_h|\le \sqrt{\frac{2\de}{c_1}}\,.
\end{equation}
By \eqref{occhio}, we may further decrease the value of $\eta$ introduced in step one so to have that if $J$ is a regular hexagon in $\T$ with $||J|-1|\le\eta$, then it makes sense to scale $J$ with respect to its barycenter in order to obtain a unit-area regular hexagon $J'$ with $\hd(\pa J,\pa J')\le C\,||J|-1|$. In particular, by \eqref{delta eps0} and \eqref{esagoniiii}, up to decrease the value of $\e_0$ we can define unit-area hexagons $K_h$ in $\T$ with the property that
\[
\hd(\pa K_h,\pa H_h^*)\le C\,||H_h^*|-1|=C\,||\Pi_h|-1|\le C\,\sqrt\de\,.
\]
By combining this estimate with \eqref{pino} we prove \eqref{trombone}. By \eqref{trombone}, $|K_j\Delta\Pi_j|\le C\sqrt\de$ for every $j$, and thus
\[
|K_h\Delta K_{h+1}|\ge|\E(h)\Delta\E(h+1)|-\sum_{j=h}^{h+1}|\E(j)\Delta K_j|\ge 2-C\,\sqrt\de-\sum_{j=h}^{h+1}|\E(j)\Delta \Pi_j|\,.
\]
In particular, \eqref{trombone 2} follows from \eqref{pino}.

\medskip

\noindent {\it Step three}: We claim the existence of a tiling $\H_0=v+\H$ of $\T$ such that
\begin{equation}
  \label{bella li}
  \hd(\S(\E),\S(\H_0))\le C\,\sqrt\de\,,\qquad |v|\le C\,\e_0\,,
\end{equation}
where here and in the rest of this step, $C$ denotes a constant depending from $|\T|$ only. Let us recall from Figure \ref{fig hexagon} that the chambers of $\H$ are ordered so that $\{\H(h)\}_{h=1}^\b$ is the ``bottom row'' in the grid defined by $\H$ and that $\H(1)=H$. Since $\E$ is an $(\e_0,\mu_0,L)$-perturbation of $\H$ one has
\begin{equation}
  \label{vicine}
  \max\Big\{\hd(\pa\E(h),\pa\H(h)),\hd(\pa\Pi_h,\pa\H(h))\Big\}\le \e_0\,,\qquad\forall h=1,...,N\,,
\end{equation}
so that \eqref{trombone} implies $\hd(\pa H,\pa K_1)\le C\,\e_0$. In particular, there exists $|\theta|,|s|,|t|\le C\e_0$ such that
\[
K_1=(t\sqrt{3}\ell,s\ell)+R_\theta H\,,
\]
where, with a slight abuse of notation, $R_\theta H$ denotes the counterclockwise rotation of $H$ by an angle $\theta$ around its left-bottom vertex (see step one). Of course, there is no reason to get a better estimate than $|s|,|t|\le C\,\e_0$ here (indeed, $\E$ itself could just be an $\e_0$-size translation of $\H$). Nevertheless, if $\theta\ne 0$, then we cannot fit $K_1$ into an hexagonal honeycomb of $\T$: therefore one expects
\begin{equation}
  \label{expectations}
  |\theta|\le C\sqrt\de\,.
\end{equation}
We prove \eqref{expectations}: set $J_1=K_1$, let $\tau_1$ be the common edge between $\Pi_1$ and $\Pi_2$, and let $\s_1$ and $\s_1'$ be the edges of $K_1$ and $K_2$ respectively such that, thanks to \eqref{trombone}, $\hd(\tau_1,\s_1)+\hd(\tau_1,\s'_1)\le C\,\sqrt\de$. In this way $\hd(\s_1,\s_1')\le C\,\sqrt\de$, and by \eqref{trombone 2} we can apply step one to deduce
\begin{equation}
  \label{hey}
  \hd(\pa J_2,\pa K_2)\le C\,\hd(\s_1,\s_1')\le C\sqrt\de\,,\qquad |J_2\Delta K_2|\le C\,\sqrt\de\,,
\end{equation}
where $J_2$ is the reflection of $J_1$ with respect to $\s_1$. Let now $\tau_2$ be common side between $\Pi_2$ and $\Pi_3$. By \eqref{trombone} and \eqref{hey} we have $\hd(\pa J_2,\pa\Pi_2)+\hd(\pa K_3,\pa\Pi_3)\le C\sqrt\de$, thus there exist edges $\s_2$ and $\s_2'$ of $J_2$ and $K_3$ respectively such that $\hd(\tau_2,\s)+\hd(\tau_2,\s')\le C\,\sqrt\de$. By \eqref{trombone 2} and \eqref{hey} one has $|J_2\Delta K_3|\ge 2-C\sqrt\de$, so that by step one $\hd(\pa J_3,\pa K_3)\le C\sqrt\de$ where $J_3$ is the reflection of $J_2$ with respect to $\s_2$. If we repeat this argument $\beta$-times, then we find regular unit-area hexagons $J_1,...,J_\beta$ such that $J_1=K_1$, $J_h$ is obtained by reflecting $J_{h-1}$ with respect to its ``vertical'' right edge, and $\hd(\pa J_h,\pa K_h)\le C\,\sqrt\de$ for $h=1,...,\beta$. By construction, $\Pi_\beta$ and $\Pi_1$ also share a common edge $\tau$, and correspondingly $J_\beta$ and $K_1$ have edges $\s$ and $\s'$ respectively with $\hd(\tau,\s)+\hd(\tau,\s')\le C\,\sqrt\de$. By reflecting $J_\beta$ with respect to $\s$ we thus find a regular unit area hexagon $J_*$ with
\[
\hd(\pa J_*,\pa K_1)\le C\,\sqrt\de\,.
\]
At the same time, since $J_*$ has been obtained by iteratively reflecting $J_1=K_1$ with respect to its ``vertical'' right edge, we find that
\[
\hd(\pa J_*,\pa J_1)\ge \frac{|\theta|}C\,.
\]
Thus \eqref{expectations} holds. As a consequence, up to apply to $K_1$ a rotation of size $C\,\sqrt\de$, one can assume that
\begin{equation}
  \label{richiuditi}
  K_1=(t\sqrt{3}\ell,s\ell)+H\,,\qquad\mbox{for some $|t|,|s|\le C\,\e_0$}\,.
\end{equation}
In particular, if we set $\H_0(h)=(t\sqrt{3}\ell,s\ell)+\H(h)$, then $\H_0$ defines a unit-area tiling of $\T$ by regular hexagons. By arguing as in the proof of \eqref{expectations}, one easily sees that
\begin{equation}
  \label{rigidita}
  \hd(\pa\Pi_h,\pa \H_0(h))\le C\,\sqrt{\de}\,,\qquad\forall h=1,...,N\,.
\end{equation}
In particular, the set of vertexes of $\Pi_h$ and $\H_0(h)$ lie at distance $C\,\sqrt\de$. Since $\S(\E)$ is the set of all the vertexes of the $\Pi_h$s, we complete the proof of \eqref{bella li}.

%
%
%

\medskip

\noindent {\it Step four}: We show that if $\mu_0$ is small enough with respect to $L$, and $\e_0$ is small enough with respect to $\mu_0$ and $|\T|$, then there exists a $C^{1,1}$-diffeomorphism $f_0$ between $\pa\H_0$ and $\pa\E$ such that
\begin{equation}
  \label{friday00}
  \|f_0\|_{C^{1,1}(\pa\H_0)}\le C\,,\qquad\|f_0-\Id\|_{C^1(\pa\H_0)}\le C\,\mu_0\,,
\end{equation}
\begin{equation}
  \label{friday01}
  \|(f_0-\Id)\cdot\tau_0\|_{C^1(\pa\H_0)}\le C\,\sup_{\S(\H_0)}|f_0-\Id|\,.
\end{equation}
where $C$ depends on $L$ only.  The map $f_0$ is more useful than the map $f$ appearing in \eqref{eL perturbation 2} because the best estimate for $f-\Id$ on $\S(\H)$ is of order $\e_0$, while, thanks to \eqref{bella li}, we have a much more precise information about $f_0-\Id$ on $\S(\H_0)$, namely
\begin{equation}
  \label{deficit 0}
    \sup_{\S(\H_0)}|f_0-\Id|\le C\,\sqrt\de\,.
\end{equation}
(In \eqref{deficit 0}, $C$ depends on $|\T|$.) Let us also notice that we cannot just define $f_0$ by composing $f$ with the translation bringing $\pa\H_0$ onto $\pa\H$, because this translation is $O(\e_0)$, and thus the resulting map $f_0$ would still have tangential displacement $O(\e_0)$. We thus need a more precise construction, directly relating $\pa\H_0$ and $\pa\E$.

To this end, we fix an edge $\s$ of $\H$, and set $\s_0=v+\s$, so that $\s_0$ is an edge of $\H_0$. We denote by $\tau_0$ and $\nu_0=\tau_0^\perp$ the constant tangent and normal unit-vector fields to $\s_0$ (and, obviously, to $\s$). We let $\gamma=f(\s)$ and set $\tau(x)=\nabla^\s f(f^{-1}(x))[\tau_0]$ and $\nu(x)=\tau(x)^\perp$, where $\nabla^\s f$ denotes the tangential gradient of $f$ with respect to $\s$. Finally, we set $[\s_0]_t=\{x\in\s_0:\dist(x,\bd(\s_0))>t\}$ for $t>0$. By \cite[Theorem 2.6, Proposition B.2]{CiLeMaIC1}, given $M>0$ there exist positive constants $C_1$ and $\mu_1$ (depending on $M$ and $\s_0$) such that if, for some $\rho\le \mu_1^2$, $\g$ satisfies the following properties
  \begin{itemize}
  \item[(a)] $\hd(\s_0,\g)+\hd(\bd(\s_0),\bd(\g))\le\rho$;
  \item[(b)] $|\tau(p)-\tau_0|+|\tau(q)-\tau_0|\le\rho$ where $\{p,q\}=f(\bd(\s))$;
  \item[(c)] there exists a map $\psi_0\in C^{1,1}([\s_0]_{\rho})$ such that
  \begin{equation*}
  [\g]_{3\rho}\subset(\Id+\psi_0\nu_0)\big([\s_0]_{\rho}\big)\subset\g\,,
  \end{equation*}
  \begin{equation*}
  \|\psi_0\|_{C^{1,1}([\s_0]_{\rho})}\le M\,,\qquad \|\psi_0\|_{C^1([\s_0]_{\rho})}\le \rho\,;
  \end{equation*}
  \item[(d)] $|\nu(x)-\nu(y)|\le M\,|x-y|$ and $|\nu(x)\cdot(y-x)|\le M\,|x-y|^2$ for every $x,y\in \g$\,,
\end{itemize}
then, there exists a $C^{1,1}$-diffeomorphism $f_0$ between $\s_0$ and $\g$ such that $f_0(\bd(\s_0))=\bd(\g)$ and
\[
  \|f_0\|_{C^{1,1}(\s_0)}\le C_1\,,\qquad\|f_0-\Id\|_{C^1(\s_0)}\le \frac{C_1}{\mu_1}\,\rho\,,
\]
\[
\|(f_0-\Id)\cdot\tau_0\|_{C^1(\s_0)}\le \frac{C_1}{\mu_1}\,\sup_{\bd(\s_0)}|f_0-\Id|\,.
\]
(Since $\s_0$ is just a segment of fixed length $\ell=(12)^{1/4}/3$, we shall not stress the dependence of $C_1$ and $\mu_1$ on $\s_0$.) We notice that property (a) holds provided $\rho\ge C\e_0$ for some $C$ depending on $|\T|$ only: indeed, by $\|f-\Id\|_{C^0(\s)}\le\e_0$ one finds $\hd(\s,\g)+\hd(\bd(\s),\bd(\g))\le\e_0$, while $|v|\le C\,\e_0$ (recall \eqref{bella li}) gives $\hd(\s,\s_0)\le C\,\e_0$. Similarly, property (b) holds if $\rho\ge\e_0$, as $\tau(x)=\nabla^\s f(f^{-1}(x))[\tau_0]$ and $\|f-\Id\|_{C^1(\s)}\le \e_0$. Property (d) follows easily from $\|f\|_{C^{1,1}(\s)}\le L$ and $\|f-\Id\|_{C^1(\s)}\le\e_0$ with $M=M(L)$. Finally, concerning property (c), we notice that by exploiting the fact that $\E$ is an $(\e_0,\mu_0,L)$-perturbation of $\H$ and setting $\psi=(f-\Id)\cdot\nu_0$, one has $\psi\in C^{1,1}([\s]_{\mu_0})$ with
\begin{equation}\label{friday1}
[\g]_{\mu_0+2\e_0}\subset(\Id+\psi\nu_0)\big([\s]_{\mu_0}\big)\subset\g\,,
\end{equation}
\begin{equation}\label{friday2}
\|\psi\|_{C^{1,1}([\s]_{\mu_0})}\le L\,,\qquad \|\psi\|_{C^1([\s]_{\mu_0})}\le \e_0\,,
\end{equation}
where the first inclusion in \eqref{friday1} follows from $\|f-\Id\|_{C^0(\s_0)}\le\e_0$ and $\g=f(\s)$.
By exploiting \eqref{friday1}, \eqref{friday2}, and the fact that $\s_0=v+\s$ with $|v|\le C\,\e_0$ by \eqref{bella li}, one can find two constants $C_2\le C_3$ (both depending just on $|\T|$) and $\psi_0\in C^{1,1}([\s]_{\mu_0+C_2\,\e_0})$ such that properties (a), (b) and (d) hold with $\rho=\mu_0+C_2\,\e_0$, and
  \begin{equation}\label{friday3}
  [\g]_{\mu_0+C_3\,\e_0}\subset(\Id+\psi_0\nu_0)\big([\s_0]_{\mu_0+C_2\,\e_0}\big)\subset\g\,,
  \end{equation}
  \begin{equation}\label{friday4}
  \|\psi_0\|_{C^{1,1}([\s_0]_{\mu_0+C_2\,\e_0})}\le L\,,\qquad \|\psi_0\|_{C^1([\s_0]_{\mu_0+C_2\,\e_0})}\le \e_0\,,
  \end{equation}
see
\begin{figure}
  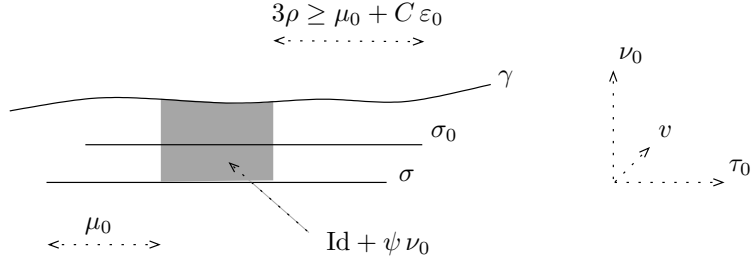\caption{{\small The function $\psi_0$ is defined by computing the values of $\psi$ after a projection of $\s_0$ onto $\s$.}}\label{fig psi0}
\end{figure}
Figure \ref{fig psi0}. Of course one can entail $3\rho> \mu_0+C_3\,\e_0$ by requiring $\e_0$ small enough with respect to $\mu_0$: in this way, property (c) follows from \eqref{friday3} and \eqref{friday4}. Summarizing, we have shown that if $\mu_0$ is small enough depending on $L$ (that is, depending on $M=M(L)$), and if $\e_0$ is small enough with respect to $\mu_0$ and $|\T|$, then properties (a)--(d) hold with $\rho=\mu_0+C_2\,\e_0$. Up to further decrease the values of $\mu_0$ and $\e_0$ we may entail $\rho\le\mu_1^2$, and thus, thanks to \cite[Theorem 2.6, Proposition B.2]{CiLeMaIC1}, find a $C^{1,1}$-diffeomorphism $f_0$ between $\s_0$ and $\g$ such that $f_0(\bd(\s_0))=\bd(\g)$ and
\[
\|f_0\|_{C^{1,1}(\s_0)}\le C\,,\qquad\|f_0-\Id\|_{C^1(\s_0)}\le C\,\mu_0\,,
\]
\[
\|(f_0-\Id)\cdot\tau_0\|_{C^{1,1}(\s_0)}\le C\,\sup_{\bd(\s_0)}|f_0-\Id|\,,
\]
where $C$ depends on $L$ only. By repeating this construction on every edge $\s_0$ of $\pa\H_0$ we complete the proof of \eqref{friday00} and \eqref{friday01}.

\medskip

\noindent {\it Step four}: With a little abuse of notation, let us denote by $\{\s_i\}_{i=1}^{3N}$ the family of segments such that $\pa\H_0=\bigcup_{i=1}^{3N}\s_i$. For every $i$ let $\tau_i$ denote a constant tangent unit vector to $\s_i$. If we set $g=f_0-\Id$, then we have
\begin{eqnarray*}
  P(\E)-P(\H)=\sum_{i=1}^{3N}\int_{\s_i}\big(|\nabla^{\s_i}g[\tau_i]+\tau_i|-1\big)\,d\H^1\,,
\end{eqnarray*}
where, by $\|g\|_{C^1(\pa\H_0)}\le\mu_0$, $\sqrt{1+t}\ge 1+t/2-t^2/8-C\,|t|^3$ ($t\ge -1$), and provided $\mu_0$ is small enough,
\begin{eqnarray*}
  |\nabla^{\s_i}g[\tau_i]+\tau_i|-1
  &=&\sqrt{1+2 \tau_i\cdot\nabla^{\s_i}g[\tau_i]+|\nabla^{\s_i}g[\tau_i]|^2}-1
  \\
  &\ge& \tau_i\cdot\nabla^{\s_i}g[\tau_i]+\frac{|\nabla^{\s_i}g[\tau_i]|^2}2-\frac{|2\,\tau_i\cdot\nabla^{\s_i}g[\tau_i]|^2}8
  -C\,\mu_0\,|\nabla^{\s_i}g[\tau_i]|^2\,.
\end{eqnarray*}
Let $\S(\H_0)=\{p_j\}_{j=1}^{2N}$, and for $p_j\in\bd(\s_i)$ denote by $v_j^i$ the tangent unit vector to $\s_i$ at $p_j$ pointing outside $\s_i$. In this way,
\[
\sum_{i=1}^{3N}\int_{\s_i}\,\tau_i\cdot\nabla^{\s_i}g[\tau_i]\,d\H^1=\sum_{j=1}^{2N}\sum_{\{i:p_j\in\bd(\s_i)\}}\,v_j^i\,g(p_j)=0\,,
\]
since $\{i:p_j\in\bd(\s_i)\}=\{i_1,i_2,i_3\}$ with $v_j^{i_2}$ and $v_j^{i_3}$ obtained from $v_j^{i_1}$ by counterclockwise rotations of $2\pi/3$ and $4\pi/3$ respectively. Hence, if we set $\nu_i=\tau_i^\perp$, then
  \begin{equation}\label{deficit 1}
    P(\E)-P(\H)\ge \sum_{i=1}^{3N}\int_{\s_i}\frac{|\nu_i\cdot\nabla^{\s_i}g[\tau_i]|^2}2\,d\H^1-C\,\mu_0\,\int_{\s_i}|\nabla^{\s_i}g[\tau_i]|^2\,d\H^1\,.
  \end{equation}
  By \eqref{friday01} and \eqref{deficit 0} we find that
  \[
  \sup_{1\le i\le 3N}\|\tau_i\cdot\nabla^{\s_i}g[\tau_i]\|_{C^0(\s_i)}\le C\,\sqrt\de\,,
  \]
  where $C$ depends on $L$ and $|\T|$. By combining this last inequality with \eqref{deficit 1}, and provided $\mu_0$ is small enough with respect to $L$ and $|\T|$, we find
  \begin{equation}
    \label{combina}
      C\,\sqrt{\de}\ge \sum_{i=1}^{3N}\int_{\s_i}|\nabla^{\s_i}g[\tau_i]|\ge\sum_{i=1}^{3N}\|g-g(p_{j(i)})\|_{C^0(\s_i)}\,,
  \end{equation}
  where for each $i=1,...,3N$ we have picked $p_{j(i)}\in \bd(\s_i)$. By \eqref{deficit 0} we have $|g(p_{j(i)})|\le C\sqrt\de$, so that \eqref{combina} implies
  \begin{equation}
    \label{combina C0}
      C\,\sqrt{\de}\ge\sum_{i=1}^{3N}\|g\|_{C^0(\s_i)}=\|f_0-\Id\|_{C^0(\pa\H_0)}\,.
  \end{equation}
  Since $f_0$ is a bijection between $\pa\H_0$ and $\pa\E$, we find that $\|f_0-\Id\|_{C^0(\pa\H_0)}\ge\hd(\pa\H_0,\pa\E)$ and thus prove \eqref{stabilita hd}. We now notice that if $u:(a,b)\to\R$ is a Lipschitz function, then
  \begin{equation}
    \label{interpol}
    \|u\|_{C^0(a,b)}^2\le 8\,\max\Big\{\Lip(u),\frac1{b-a}\Big\}\,\|u\|_{L^1(a,b)}\,.
  \end{equation}
  Indeed, let $x_0\in(a,b)$ be such that $u(x_0)=\|u\|_{C^0(a,b)}$ and set $L=\Lip(u)$, $r=|u(x_0)|/4L$. If $(x_0,x_0+r)\subset(a,b)$ or $(x_0-r,x_0)\subset(a,b)$, then by integrating $|u(y)|\ge |u(x_0)|-L|x_0-y|$ in $y$ over $(x_0,x_0+r)$ or over $(x_0-r,x_0)$ respectively, we find
  \[
  \int_{(a,b)}|u|\ge r\,|u(x_0)|-L\frac{r^2}2\ge \frac{|u(x_0)|^2}{8L}\,;
  \]
  otherwise one has $b-a\le 2r$ and thus $|u(y)|\ge |u(x_0)|/2$ for every $y\in(a,b)$. In order to complete the proof of \eqref{stabilita f C1} we just need to use \eqref{combina C0} and to combine the first inequality in \eqref{combina} with $\|f_0\|_{C^{1,1}(\pa\H)}\le C$ and with \eqref{interpol} (applied  to the components of $\nabla^{\pa^*\H_0}(f_0-\Id)$).
\end{proof}

\section{Proof of Theorem \ref{thm main periodic}, Theorem \ref{thm pertub volumes} and Theorem \ref{thm pertub metric}}\label{section fine} We start by introducing the following fundamental tool in the study of isoperimetric problems with multiple volume constraints. This kind of construction is originally found in \cite{Almgren76}, and it is fully detailed in our setting in \cite[Sections 29.5-29.6]{maggiBOOK}, see also \cite[Theorem C.1]{CiLeMaIC1}. Since the version of this lemma needed here does not seem to appear elsewhere, we give some details of the proof.

\begin{lemma}[Volume-fixing variations]\label{lemma volume fixing}
  If $\E_0$ is a $N$-tiling of $\T$, $\g\in(0,1]$ and $L>0$, then there exist positive constants $r_0$, $\s_0$, $\e_0$, and $C_0$ (depending on $\E_0$, $L$ and $\g$ only) with the following property: if $\eta\in\R^N$ with $\sum_{h=1}^N\eta_h=0$, $\Phi\in\Lip(\T\times S^1;(0,\infty))$, $\psi\in C^{1,\g}(\T;(0,\infty))$, $x\in\T$, and $\E$ and $\F$ are $N$-tilings of $\T$ with
  \begin{gather}\label{volumefix hp 0}
    \|\Phi\|_{C^{0,1}(\T\times S^1)}+\|\psi\|_{C^{1,\g}(\T)}\le L\,,
    \\
    \label{volumefix hp 1}
    \d(\E,\E_0)\le \e_0\,,
    \\
    \label{volumefix hp 2}
    \F\Delta\E\cc B_{x,r_0}\,,\qquad |\eta|<\s_0\,,
  \end{gather}
  then there exists a $N$-cluster $\F'$ such that
  \begin{eqnarray}\label{volumefix thesis 1}
    \F'\Delta\F&\cc& \T\setminus\ov{B}_{x,r_0}\,,
    \\\label{volumefix thesis 2}
    \int_{\F'(h)}\psi&=&\eta_h+\int_{\E(h)}\psi\,,
    \\\label{volumefix thesis 3}
    |\PHI(\F')-\PHI(\F)|&\le& C_0\,P(\E)\,\Big(\sum_{h=1}^N\Big|\int_{\F(h)}\psi-\int_{\E(h)}\psi\Big|+|\eta|\Big)\,,
    \\\label{volumefix thesis 4}
    |\d(\F',\E)- \d(\F,\E)|&\le&C_0\,P(\E)\,\Big(\sum_{h=1}^N\Big|\int_{\F(h)}\psi-\int_{\E(h)}\psi\Big|+|\eta|\Big)\,.
  \end{eqnarray}
\end{lemma}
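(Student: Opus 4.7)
The plan is to adapt Almgren's classical volume-fixing construction to the multi-chamber setting with a weighted volume and, crucially, uniformly in the location $x$ of the forbidden ball. The starting point is geometric: since $\E_0$ is a fixed $N$-tiling, its chamber-adjacency graph (with an edge $(h,k)$ whenever $\H^1(\pa^*\E_0(h)\cap\pa^*\E_0(k))>0$) is connected, and each such interface contains a positive-measure set of reduced-boundary points. I would fix a spanning tree of this graph and, along each edge, prepare several disjoint candidate balls centered at reduced-boundary points of the corresponding interface, obtaining a finite collection $\{B_{y_m,2r_0}\}_{m=1}^M$ with associated pairs $(h_m,k_m)$. For $r_0$ small it is a combinatorial fact that for every $x\in\T$ one can select a subcollection $I(x)$ of $N-1$ indices whose balls are disjoint from $\overline{B}_{x,r_0}$ and whose associated pairs still span the adjacency graph.

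For each $m$, I would choose a compactly supported smooth vector field $X_m$ in $B_{y_m,r_0}$ whose flow $\Psi^m_t$ transfers $\psi$-weighted mass from chamber $h_m$ to chamber $k_m$ at a definite nonzero rate
\[
a_m=\int_{\pa^*\E_0(h_m)\cap B_{y_m,r_0}}(X_m\cdot\nu_{\E_0(h_m)})\,\psi\,d\H^1>0,
\]
whose positivity is ensured by $y_m$ being a reduced-boundary point and by $\psi>0$. Given $x$, $\E$, $\F$, and $\eta$ as in the hypotheses, I would apply the composition $\Psi_t=\prod_{m\in I(x)}\Psi^m_{t_m}$ (the supports being pairwise disjoint) to $\F$, which coincides with $\E$ on every $B_{y_m,r_0}$ because $\F\Delta\E\cc B_{x,r_0}$. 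The map $t\mapsto T(t)\in\R^N$ recording the resulting change in $\psi$-volumes takes values in $\{\sum_h\cdot=0\}$, and its differential at $t=0$ is close in operator norm to the invertible ``spanning-tree map'' $t\mapsto\sum_m t_m a_m(e_{h_m}-e_{k_m})$ associated to $\E_0$, up to an error controlled by $\d(\E,\E_0)\le\e_0$. A quantitative inverse function theorem (with the Hölder modulus of $DT$ supplied by the $C^{1,\gamma}$ regularity of $\psi$) then produces, for $|\eta|<\sigma_0$ sufficiently small, a unique small $t^*$ solving $T(t^*)_h=\eta_h+\int_{\E(h)}\psi-\int_{\F(h)}\psi$, together with the estimate
\[
|t^*|\le C\bigg(|\eta|+\sum_{h=1}^N\Big|\int_{\F(h)}\psi-\int_{\E(h)}\psi\Big|\bigg).
\]

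Setting $\F'=\Psi_{t^*}(\F)$ gives \eqref{volumefix thesis 1} and \eqref{volumefix thesis 2} by construction. The estimates \eqref{volumefix thesis 3}--\eqref{volumefix thesis 4} follow from standard area-formula computations: $\Psi_{t^*}$ is supported in the finitely many balls $B_{y_m,r_0}$ where $\F=\E$ and the perimeter of $\E$ is under control and comparable to $P(\E)$. This gives $|\PHI(\F')-\PHI(\F)|\le C\,L\,|t^*|\,P(\E)$ and $|\d(\F',\E)-\d(\F,\E)|\le C\,|t^*|\,P(\E)$; combining with the bound on $|t^*|$ above yields the claim.

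The main obstacle is the uniformity of all constants in $x$: this forces the preselection in Step 1 of a large but fixed collection of candidate balls, and the operator norm of the inverse of $DT(0)$ must be controlled uniformly over the finitely many possible spanning-tree subcollections $I(x)$. A secondary technical point is that the exponent $\gamma$ enters only through the Hölder regularity of $DT$ needed to run the quantitative inverse function theorem on the nonlinear volume functional, which is why the constants depend on $\gamma$.
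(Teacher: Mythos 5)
Your proposal follows essentially the same Almgren volume-fixing route as the paper: balls centered on reduced-boundary interface points, compactly supported vector fields that transfer $\psi$-weighted mass between adjacent chambers, a rank-$(N-1)$ derivative of the volume map, and the $C^{1,\gamma}$ implicit function theorem to hit the prescribed increments, with the perimeter and distance estimates coming from the area formula. The only (cosmetic) difference is in handling uniformity in $x$: the paper prepares two disjoint full families of balls and uses whichever family is entirely free of $\ov{B}_{x,r_0}$, whereas you select a spanning-tree subcollection from a larger pool of candidate balls; both devices work.
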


\begin{remark}
  {\rm In practice we are going to apply this lemma either with $\eta=0$ and $\F\Delta\E\ne\emptyset$, or with $\eta\ne 0$ and $\F=\E$. In the first case, we are given a compactly supported variation $\F$ of $\E$, and we want to modify $\F$ outside of $B_{x,r_0}$ into a new $N$-tiling $\F'$ so that $\int_{\F'(h)}\psi=\int_{\E(h)}\psi$ for every $h=1,...,N$. In the second case we want to modify $\E$ so that  $\int_{\E(h)}\psi$ is changed into $\eta_h+\int_{\E(h)}\psi$ for every $h=1,...,N$. In both cases, we want to control the change in $\PHI$-energy and the change in distance from $\E$ needed to pass from $\F$ to $\F'$. The name attached to the lemma is motivated by the fact that one usually takes $\psi\equiv1$.}
\end{remark}

\begin{proof}[Proof of Lemma \ref{lemma volume fixing}]
  The basic step consists in picking up a ball $B_{z,\e}$ and notice that if $T\in C^\infty_c(B_{z,\e};\R^2)$ and $f_t(x)=x+t\,T(x)$ for $x\in\T$, then for every Borel set $E\subset\T$ the function $\PSI_E(t)=\int_{f_t(E)}\psi=\int_E\psi(f_t)Jf_t$ is of class $C^{1,\g}(-t_0,t_0)$ with
  \begin{equation}
    \label{derivata psi}
    \|\PSI_E\|_{C^{1,\g}(-t_0,t_0)}\le C\,,\qquad  \Big|\int_{f_t(E)}\psi-\int_E\psi-t\,\int_E\,\Div(\psi\,T)\Big|\le C\,|t|^{1+\g}\,,
  \end{equation}
  where $t_0$ and $C$ denote positive constants depending only on $\g$, $L$, $|\T|$, and $\|T\|_{C^1(\T)}$. Next, one considers two families of balls $\{B_{z_i,\e}\}_{i=1}^M$  and $\{B_{y_i,\e}\}_{i=1}^M$ with $z_i\,,y_i\in\pa^*\E_0(h(i))\cap \pa^*\E_0(k(i))$ (for $1\le h(i)\ne k(i)\le N$ to be properly chosen -- see condition \eqref{rank} below) and with $|z_i-z_j|>2\e$ and $|y_i-y_j|>2\e$ for $1\le i<j\le M$ and $|y_i-z_j|>2\e$ for $1\le i\le j\le M$. For each $i$ we can find $T_i\in C^\infty_c(B_{z_i,\e};\R^2)$ such that
  \begin{gather}\label{fix 1}
  \int_{\E_0(h(i))}\Div(\psi\,T_i)=1=-\int_{\E_0(k(i))}\Div(\psi\,T_i)\,,
  \\\label{fix 2}
  \int_{\E_0(j)}\Div(\psi\,T_i)=0\,,\qquad j\ne h(i),k(i)\,.
  \end{gather}
  Let us consider the smooth map $f:(-t_0,t_0)^M\times\T\to\T$ defined by $f(\tt,x)=x+\sum_{i=1}^M\,t_i\,T_i(x)$, $\tt=(t_1,...,t_M)$, so that for $t_0>0$ small enough $f(\tt,\cdot)$ is a smooth diffeomorphism of $\T$ with
  \begin{equation}
    \label{spt 1}
      \spt(f(\tt,\cdot)-\Id)\cc \bigcup_{i=1}^MB_{z_i,\e}\,.
  \end{equation}
  If we let $\a=(\a_1,...,\a_N)\in C^{1,\g}((-t_0,t_0)^M;\R^N)$ be defined by
  \[
  \a_h(\tt)=\int_{f(\tt,\E(h))}\psi-\int_{\E(h)}\psi\,,\qquad h=1,...,N\,,
  \]
  then $\a((-t_0,t_0)^M)\subset V=\{\eta\in\R^N:\sum_{h=1}^N\eta_h=0\}$, $\|\a\|_{C^{1,\g}((-t_0,t_0)^M)}\le C$, and, by
  \eqref{volumefix hp 0}, \eqref{volumefix hp 1}, \eqref{derivata psi}, \eqref{fix 1} and \eqref{fix 2}, one finds
  \begin{eqnarray}\label{vicino}
  \Big|\frac{\pa\a_{h(i)}}{\pa t_i}(\tt)-1\Big|+\Big|\frac{\pa\a_{k(i)}}{\pa t_i}(\tt)+1\Big|+\max_{j\ne h(i),k(i)}\Big|\frac{\pa\a_{j}}{\pa t_i}(\tt)\Big|\le C\,\e_0\,,
  \end{eqnarray}
  where, from now on, $C$ denotes a constant depending only on $L$, $\g$, $|\T|$, and $\E_0$ (through $\|T_i\|_{C^1(\T)}$). Provided $h(i)$ and $k(i)$ are suitable defined (see \cite[Step one, Proof of Theorem 29.14]{maggiBOOK}) one can entail from \eqref{vicino} that
  \begin{equation}
    \label{rank}
    {\rm dim}\nabla\a(\00)=N-1\,.
  \end{equation}
  By the implicit function theorem there exists $\s_1>0$ and an open neighborhood $U$ of $\00\in\R^M$ such that $\a^{-1}\in C^{1,\g}(V_{\s_1};U)$ with $V_{\s_1}=\{\eta\in V:|\eta|<\s_1\}$, and
  \begin{equation}
    \label{sotto}
    |\a^{-1}(\eta)|\le C\,|\eta|\,,\qquad \forall \eta\in V_{\s_1}\,.
  \end{equation}
  Similarly, we may construct functions $g$ and $\beta$, analogous to $f$ and $\a$, starting from the family of balls $\{B_{y_i,\e}\}_{i=1}^M$. Now let $\F$ be as in \eqref{volumefix hp 2}, and assume that
  \begin{equation}
    \label{s0 r0}
    \s_0+\|\psi\|_{C^0(\T)}\pi\,r_0^2<\s_1\,.
  \end{equation}
  Up to further decrease the value of $r_0$ with respect to $\e$, we may also assume that $\ov{B}_{x,r_0}\cap \ov{B}_{z_i,\e}=\emptyset$ for every $i=1,...,M$, or that $\ov{B}_{x,r_0}\cap \ov{B}_{y_i,\e}=\emptyset$ for every $i=1,...,M$. Without loss of generality we may assume to be in the former case, and set
  \[
  \F'(h)=\big(\F(h)\cap B_{x,r_0}\big)\cup\big(f(\a^{-1}(w),\E(h))\setminus B_{x,r_0}\big)\,,\qquad 1\le h\le N\,,
  \]
  where $w_h$ is defined by the identity
  \[
  \int_{\F(h)\cap B_{x,r_0}}\psi=\eta_h-w_h-\int_{\E(h)\cap B_{x,r_0}}\psi\,,\qquad 1\le h\le N\,.
  \]
  By construction one has \eqref{volumefix thesis 1}. Moreover, by definition of $w_h$, by \eqref{spt 1} and since $\ov{B}_{x,r_0}\cap \ov{B}_{z_i,\e}=\emptyset$ for every $i=1,...,M$, one has
  \begin{eqnarray*}
    \int_{\F'(h)}\psi-\int_{\E(h)}\psi&=&\int_{\F(h)\cap B_{x,r_0}}\psi+\int_{f(\a^{-1}(w),\E(h))\setminus B_{x,r_0}}\psi-\int_{\E(h)}\psi
    \\
    &=&\eta_h-w_h+\int_{f(\a^{-1}(w),\E(h))\setminus B_{x,r_0}}\psi-\int_{\E(h)\setminus B_{x,r_0}}\psi
    \\
    &=&\eta_h-w_h+\int_{f(\a^{-1}(w),\E(h))}\psi-\int_{\E(h)}\psi=\eta_h-w_h+\a_h(\a^{-1}(w))\,.
  \end{eqnarray*}
  By  \eqref{volumefix hp 2} and \eqref{s0 r0} one has $|w|<\s_1$, so that \eqref{volumefix thesis 2} is proved. We now notice that by \cite[Equation (2.9)]{dephilippismaggiARMA}
  \[
  \PHI(f(\tt,E))=\int_{f(\tt,\pa^*E)}\Phi(y,\nu_{f_t(E)}(y))\,d\H^1(y)
  =
  \int_{\pa^*E}\Phi\Big(f_t(x),{\rm cof}\nabla f_t(x)[\nu_E(x)]\Big)\,d\H^1(x)\,,
  \]
  so that, by \eqref{volumefix hp 0}, $|\PHI(f(\tt,E))-\PHI(E)|\le C\,|t|\,P(E)$. By \eqref{sotto} we immediately deduce \eqref{volumefix thesis 3}. Finally \eqref{volumefix thesis 4} is obtained by exploiting \cite[Lemma C.2]{CiLeMaIC1}.
\end{proof}

We now translate the improved convergence theorem for planar bubble clusters from \cite{CiLeMaIC1} in the case of tilings of $\T$. One says that a $N$-tiling $\E$ of $\T$ is {\it $(\Lambda,r_0)$-minimizing} if
\[
P(\E)\le P(\F)+\Lambda\,\d(\E,\F)\,,
\]
whenever $\F$ is a $N$-tiling of $\T$ and  $\E\Delta\F=\bigcup_{h=1}^N\E(h)\Delta\F(h)\cc B_{x,r_0}$ for some $x\in\T$. If $\E$ is a $(\Lambda,r_0)$-minimizing tiling of $\T$, then (by a trivial adaptation of, say, \cite[Theorem 3.16]{CiLeMaIC1}) $\E$ is of class $C^{1,1}$. Moreover, the curves $\g_i$ and the points $p_j$ in \eqref{class C1} are such that each $\g_i$ has distributional curvature bounded by $\Lambda$, and for every $p_j$ there exists exactly three curves from $\{\g_i\}_{i\in I}$ which share $p_j$ as a common boundary point, and meet at $p_j$ by forming three 120 degrees angles.

We notice that, by \eqref{hexagonal honeycomb thm torus}, the reference honeycomb $\H$ is a $(0,\infty)$-minimizing unit-area tiling of $\T$. The following result is what we call an {\it improved convergence theorem}.

\begin{theorem}\label{thm improved convergence}
  Given $\Lambda\ge0$, there exist positive constants $L$ and $\mu_*>0$ (depending on $\Lambda$ and $\H$) with the following property. If $N=|\T|$, $\mu<\mu_*$ and $\{\E_k\}_{k\in\N}$ is a sequence of  $(\Lambda,r_0)$-minimizing $N$-tilings of $\T$ (for some $r_0>0$) with $\d(\E_k,\H)\to 0$ as $k\to\infty$, then there exist $k(\mu)\in\N$ and, for every $k\ge k(\mu)$, a $C^{1,1}$-diffeomorphism $f_k$ with
  \begin{equation}
    \label{ic C11 e C1}
      \sup_{k\ge k(\mu)}\|f_k\|_{C^{1,1}(\pa\H)}\le L\,,\qquad \lim_{k\to\infty}\|f_k-\Id\|_{C^1(\pa\H)}=0\,,
  \end{equation}
  \begin{equation}
    \label{ic tangenziale}
      \ttau_{\H}(f_k-\Id)=0\quad\mbox{on $[\pa\H]_\mu$}\,,\qquad \|\ttau_{\H}(f_k-\Id)\|_{C^1(\pa^*\H)}\le\frac{L}\mu\,\sup_{\S(\H)}|f_k-\Id|\,.
  \end{equation}
  In particular, $\E_k$ is a $(\e_k,\mu,L)$-perturbation of $\H$ whenever $k\ge k(\mu)$.
\end{theorem}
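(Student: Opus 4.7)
The plan is to apply the improved convergence theorem of \cite{CiLeMaIC1} locally near each vertex of $\H$, and then glue the resulting local diffeomorphisms so as to enforce the tangential vanishing condition on $[\pa\H]_\mu$.

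Since $\T$ is locally isometric to $\R^2$ and $\S(\H)$ consists of finitely many points $\{p_j\}$ at which three edges of $\pa\H$ meet at $2\pi/3$ angles, small balls $U_j\subset\T$ around each $p_j$ identify $\E_k\cap U_j$ with planar $(\Lambda,r_0)$-minimizing clusters whose $L^1$-limit is the standard $120^\circ$ triple junction. Density estimates for $(\Lambda,r_0)$-minimizing tilings upgrade the hypothesis $\d(\E_k,\H)\to 0$ to Hausdorff convergence $\pa\E_k\to\pa\H$, guarantee that each $p_j$ has a unique nearby triple point $q_j^k\in\S(\E_k)$ for $k$ large, and ensure that on each edge $\s_i$ of $\pa\H$ the corresponding arc of $\pa\E_k$ is of class $C^{1,1}$. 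The main theorem of \cite{CiLeMaIC1}, applied on each $U_j$, then yields a $C^{1,1}$-diffeomorphism $\vphi_j^k$ from $U_j\cap\pa\H$ onto $U_j\cap\pa\E_k$ with $\vphi_j^k(p_j)=q_j^k$, uniformly bounded $C^{1,1}$-norm, and $\|\vphi_j^k-\Id\|_{C^1}\to 0$. Away from the vertices, the curvature bound $\Lambda$ combined with Hausdorff convergence gives that each arc of $\pa\E_k$ close to an edge $\s_i$ is a normal graph $\{x+u_{i,k}(x)\,\nu_{\s_i}:x\in\s_i\}$ with $\|u_{i,k}\|_{C^{1,1}}$ uniformly bounded and $\|u_{i,k}\|_{C^1}\to 0$.

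To build $f_k$ with the required tangential vanishing, fix on each edge $\s_i$ a smooth cutoff $\chi$ equal to $1$ on $[\s_i]_\mu$, vanishing near the endpoints of $\s_i$, and satisfying $|\chi'|\le 2/\mu$. Define
\[
f_k(x)=x+\chi(x)\,u_{i,k}(x)\,\nu_{\s_i}+\big(1-\chi(x)\big)\big(\vphi_j^k(x)-x\big)\,,\qquad x\in\s_i\,,
\]
where $p_j$ is the endpoint of $\s_i$ on the side of $x$ where $\chi<1$. On $[\pa\H]_\mu$ one gets $f_k=\Id+u_{i,k}\nu_{\s_i}$, so $\ttau_{\H}(f_k-\Id)\equiv 0$ there, while $f_k(p_j)=q_j^k$ consistently across the three edges meeting at $p_j$. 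The uniform $C^{1,1}$-bounds on $\vphi_j^k$, $u_{i,k}$ and $\chi$ combine to give $\|f_k\|_{C^{1,1}(\pa\H)}\le L$, and the $C^1$-smallness of $\vphi_j^k-\Id$ and of $u_{i,k}$ yields $\|f_k-\Id\|_{C^1(\pa\H)}\to 0$, which proves \eqref{ic C11 e C1}. In the strip $\pa\H\setminus[\pa\H]_\mu$ of width $\mu$, the tangential component equals $(1-\chi)\,[(\vphi_j^k-\Id)\cdot\tau_{\s_i}]\,\tau_{\s_i}$, so its $C^0$-norm is bounded by $\sup_{\S(\H)}|f_k-\Id|+C\,\mu\,\|\vphi_j^k-\Id\|_{C^1}$, and its tangential derivative acquires the factor $1/\mu$ from $\chi'$, yielding \eqref{ic tangenziale}.

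The main obstacle is the compatibility of the two parameterizations in the gluing: the vertex diffeomorphism $\vphi_j^k$ must differ from the normal graph on each incident edge only by a $C^1$-small tangential reparametrization, so that $f_k$ is an injective $C^{1,1}$-diffeomorphism onto $\pa\E_k$ rather than merely a continuous map. This is precisely the $C^1$-improved convergence content of \cite{CiLeMaIC1}, which controls the tangential component of the local diffeomorphism at the triple junction in terms of the displacement of the singular point. Once this compatibility is in place, the stated bounds on $f_k$ make $\E_k$ an $(\e_k,\mu,L)$-perturbation of $\H$ with $\e_k=\|f_k-\Id\|_{C^1(\pa\H)}\to 0$.
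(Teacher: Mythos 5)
The paper's proof is a one-liner: it simply invokes \cite[Theorem 1.5]{CiLeMaIC1} as a black box, noting that the torus setting requires only a trivial adaptation. Your proposal instead attempts to reconstruct the proof of that cited result from local ingredients (density estimates, local diffeomorphisms near triple points, normal graphs away from them, cutoff gluing). This is a reasonable attempt at filling in the omitted details, and the overall architecture is the right one.

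However, your explicit gluing formula
\[
f_k(x)=x+\chi(x)\,u_{i,k}(x)\,\nu_{\s_i}+\big(1-\chi(x)\big)\big(\vphi_j^k(x)-x\big)
\]
does not in fact define a map onto $\pa\E_k$. Where $0<\chi(x)<1$, the point $f_k(x)$ is a convex combination of two points on the $C^{1,1}$ arc $\pa\E_k$, and since that arc has nonzero curvature the combination generically lies off it (at distance comparable to the square of the displacements). The correct fix is not to interpolate the two \emph{displacement vectors} but to interpolate the \emph{tangential reparametrizations}: since $\Id+u_{i,k}\nu_{\s_i}$ and $\vphi_j^k$ both parametrize the same arc of $\pa\E_k$, they differ on $\s_i$ by a tangential shift $t_j^k:\s_i\to\R$, and one should set $f_k(x)=\big(\Id+u_{i,k}\nu_{\s_i}\big)\circ\big(x+(1-\chi(x))t_j^k(x)\tau_{\s_i}\big)$, which always lands on $\pa\E_k$. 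Relatedly, your estimate for the $C^0$-norm of the tangential component, $\sup_{\S(\H)}|f_k-\Id|+C\,\mu\,\|\vphi_j^k-\Id\|_{C^1}$, contains a second term that is not obviously dominated by $\frac{L}{\mu}\sup_{\S(\H)}|f_k-\Id|$; to close this you would need exactly the triple-junction tangential control of \cite{CiLeMaIC1}. You acknowledge both issues and defer to \cite{CiLeMaIC1}, but at that point the argument becomes circular and it is cleaner (and is what the authors do) to simply cite \cite[Theorem 1.5]{CiLeMaIC1}, which already packages the reparametrized gluing, the uniform $C^{1,1}$ bound, and the $\frac{L}{\mu}$-tangential estimate, rather than to re-derive it with a formula that needs repair.
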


\begin{proof}
  This is a simple variant of \cite[Theorem 1.5]{CiLeMaIC1}, and therefore we omit the details.
\end{proof}


Let us now set
\begin{equation}
  \label{kappa R}
\k=\k(\T)=\inf\,\liminf_{k\to\infty}\frac{P(\F_k)-P(\H)}{\a(\F_k)^2}\,,
\end{equation}
where the infimum is taken among all sequences $\{\F_k\}_{k\in\N}$ of unit-area tilings of $\T$ such that $\a(\F_k)>0$ for every $k\in\N$ and $\a(\F_k)\to0$ as $k\to\infty$. By a compactness argument, Theorem \ref{thm main periodic} is equivalent in saying that $\k>0$.

\begin{lemma}\label{thm selection}
  If $\k=0$, then there exists a sequence of $(\Lambda,r_0)$-minimizing unit-area tilings $\{\E_k\}_{k\in\N}$ such that $\a(\E_k)>0$ for every $k\in\N$, $\a(\E_k)\to 0$ as $k\to\infty$, and
  \begin{equation}\label{behavior di Ek}
  P(\E_k)=P(\H)+o(\a(\E_k)^2)\,,\qquad\mbox{as $k\to\infty$}\,.
  \end{equation}
\end{lemma}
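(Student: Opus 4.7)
The argument is a selection principle in the spirit of \cite{CicaleseLeonardi}: starting from a sequence realizing $\k=0$, I would produce $\E_k$ as the minimizer of a perimeter-plus-penalty functional that forces the desired behavior. Concretely, pick unit-area tilings $\F_k$ with $s_k := \a(\F_k)>0$, $s_k\to 0$, and $P(\F_k)-P(\H)=o(s_k^2)$. Observe first that $\a$ is $1$-Lipschitz with respect to $\d$ (the optimal relabeling and translation for $\E_1$ can be transported across the symmetric difference to $\E_2$), and that $\a$ is invariant under chamber permutations and lattice translations, so by translating each $\F_k$ we may assume $\d(\F_k,\H)\to 0$.

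Fix $\Lambda>0$ (any fixed choice) and consider, for each $k$, the penalized minimization
\[
\inf\Big\{F_k(\E):=P(\E)+\Lambda\,|\a(\E)-s_k|:\text{$\E$ is a unit-area $N$-tiling of $\T$}\Big\}.
\]
Since $P$ is $L^1$-lower semicontinuous and $\a$ is $\d$-continuous, BV compactness on $\T$ yields a minimizer $\E_k$. Comparing $F_k(\E_k)\le F_k(\F_k)=P(\F_k)$ with Hales' inequality $P(\E_k)\ge P(\H)$ extracts the master estimate
\[
\Lambda\,|\a(\E_k)-s_k| + \big(P(\E_k)-P(\H)\big)\le P(\F_k)-P(\H)=o(s_k^2).
\]
Both summands being nonnegative, $P(\E_k)-P(\H)=o(s_k^2)$ and $|\a(\E_k)-s_k|=o(s_k^2)$, so $\a(\E_k)=s_k(1+o(1))\to 0$, and $\a(\E_k)>0$ for large $k$ (otherwise $\Lambda s_k=o(s_k^2)$, impossible). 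Substituting $\a(\E_k)\sim s_k$ yields $P(\E_k)-P(\H)=o(\a(\E_k)^2)$, which is \eqref{behavior di Ek}. Moreover $\d(\E_k,\H)\to 0$, because if $\d(\E_k,\H)\not\to 0$ along a subsequence, then $P(\E_k)>P(\H)+\eta$ by the qualitative form of Hales' theorem, contradicting $P(\E_k)\to P(\H)$.

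The remaining task---and, in my view, the delicate point---is to certify that each $\E_k$ is $(\Lambda',r_0)$-minimizing against \emph{arbitrary} $N$-tilings $\F$ (not only unit-area ones) supported in a small ball. This is where Lemma \ref{lemma volume fixing} enters, applied with $\E_0=\H$, $\psi\equiv 1$, and $\eta=0$. Since $\d(\E_k,\H)\to 0$, for $k$ large the hypothesis \eqref{volumefix hp 1} is met; given $\F$ with $\F\Delta\E_k\cc B_{x,r_0}$, the lemma produces a unit-area $N$-tiling $\F'$ coinciding with $\F$ on $B_{x,r_0}$, with both $|P(\F')-P(\F)|$ and $|\d(\F',\E_k)-\d(\F,\E_k)|$ bounded by $C\,\d(\F,\E_k)$, using that $P(\E_k)$ is uniformly bounded and $\sum_h \big||\F(h)|-|\E_k(h)|\big|\le 2\,\d(\F,\E_k)$. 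Plugging $\F'$ into $F_k(\E_k)\le F_k(\F')$ and invoking the $1$-Lipschitzness of $\a$ gives
\[
P(\E_k)\le P(\F')+\Lambda\,\d(\F',\E_k)\le P(\F)+\Lambda'\,\d(\F,\E_k),
\]
with $\Lambda'=\Lambda'(\Lambda,\H)$ independent of $k$. The only real subtlety is that the same $\Lambda$ both enforces $\a(\E_k)\approx s_k$ (hence positivity of $\a(\E_k)$) and enters the almost-minimality constant $\Lambda'$; since $\Lambda$ can be chosen as any fixed positive number and the volume-fixing lemma furnishes a uniform $r_0$ as soon as $\d(\E_k,\H)\to 0$, both requirements can be met simultaneously.
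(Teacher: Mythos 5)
Your proposal takes the same selection-principle strategy as the paper---penalized minimization followed by an appeal to the volume-fixing variations of Lemma~\ref{lemma volume fixing} to upgrade the minimizers to $(\Lambda,r_0)$-minimizing tilings---but with a different penalization: you minimize $P(\E)+\Lambda\,|\a(\E)-s_k|$ over \emph{all} unit-area tilings, while the paper minimizes $P(\E)+\d(\E,\F_k)^2$ subject to the open constraint $\a(\E)>0$. Your choice has a small advantage: positivity of $\a(\E_k)$ and the asymptotics $\a(\E_k)=s_k(1+o(1))$ fall out immediately from the comparison $F_k(\E_k)\le F_k(\F_k)$, with no need to carry an open constraint through the existence step. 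Both penalties certify $(\Lambda',r_0)$-minimality equally well, since each is Lipschitz with respect to $\d$ ($\a$ is $1$-Lipschitz, as you observe; $\d(\cdot,\F_k)^2$ has Lipschitz constant at most $2|\T|$). The paper's penalty has the compensating advantage of giving $\d(\E_k,\F_k)\to 0$, hence $\d(\E_k,\H)\to 0$, for free, which is exactly hypothesis~\eqref{volumefix hp 1} of Lemma~\ref{lemma volume fixing}.

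That hypothesis is where your write-up has a genuine but easily repaired gap. Your justification that $\d(\E_k,\H)\to 0$ (``otherwise $P(\E_k)>P(\H)+\eta$ by the qualitative form of Hales' theorem'') is incorrect: a sequence with $\a(\E_k)\to 0$ and $P(\E_k)\to P(\H)$ can converge to $v+\H$ for a fixed nonzero $v$, in which case $\d(\E_k,\H)$ stays bounded away from zero while the perimeter gap still vanishes. Since your functional $F_k$ is invariant under lattice translations and relabelings, nothing prevents a minimizer from drifting to any translate of $\H$. The fix is the one you already half-invoked when normalizing the $\F_k$: the infimum defining $\a(\E_k)$ is attained (compactness of $(s,t)\in[0,1]^2$ and finiteness of the permutation group), so replace $\E_k$ by the translate and relabeling achieving $\d(\E_k,\H)=\a(\E_k)$; this leaves $F_k(\E_k)$, $P(\E_k)$ and $\a(\E_k)$ unchanged, hence it is still a minimizer, and now $\d(\E_k,\H)=\a(\E_k)\to 0$ as required for the volume-fixing lemma.
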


\begin{proof}
  By definition of $\k$, and since we are assuming $\k=0$, there exist unit-area tilings $\{\F_k\}_{k\in\N}$ of $\T$ such that $\a(\F_k)>0$ for every $k\in\N$, and
  \begin{equation}
    \label{Fk}
    \a(\F_k)\to 0\,,\qquad P(\F_k)=P(\H)+o(\a(\F_k)^2)\,,\qquad\mbox{as $k\to\infty$}\,.
  \end{equation}
  For every $k\in\N$, let $\E_k$ be a minimizer in the variational problem
  \[
  \inf\,\Big\{P(\E)+\d(\E,\F_k)^2 \ | \ \text{$\E$ unit-area tiling of $\T$ with $\a(\E)>0$}\Big\}\,.
  \]
  By comparing $\E_k$ with $\F_k$ and then subtracting $P(\H)$ one has
  \begin{equation}
    \label{comparison Fk}
      P(\E_k)-P(\H)+\d(\E_k,\F_k)^2\leq P(\F_k)-P(\H)=o(\a(\F_k)^2)\,.
  \end{equation}
  Since $|\a(\E_k)-\a(\F_k)|\le\d(\E_k,\F_k)$ and $P(\E_k)\ge P(\H)$, we conclude that
  \begin{equation}
    \label{take}
      \lim_{k\to\infty}\frac{\a(\E_k)}{\a(\F_k)}=1\,,
  \end{equation}
  so that, in particular, $\a(\E_k)\to 0$ as $k\to\infty$. Dividing by $\a(\E_k)^2$ in \eqref{comparison Fk} and using \eqref{take}, we complete the proof of \eqref{behavior di Ek}. We now show that each $\E_k$ is $(\Lambda,r_0)$-minimizing in $\T$. Indeed, let $r_0$, $\e_0$, $\s_0$ and $C_0$ be the constants associated by Lemma \ref{lemma volume fixing} to $\E_0=\H$, $\PHI=P$ and $\psi\equiv 1$. Since $\a(\E_k)\to0$, up to translations we have $\d(\E_k,\H)\le\e_0$ for $k$ large. We apply Lemma \ref{lemma volume fixing} with $\E=\E_k$, $\F$ a $N$-tiling with $\E_k\Delta\F\cc B_{x,r_0}$ for some $x\in\T$, and $\eta=0$, to find a unit-area tiling $\F'$ such that
  \begin{eqnarray*}
    &&P(\E_k)\le P(\E_k)+\d(\E_k,\F_k)^2\le P(\F')+\d(\F',\E_k)^2
    \\
    &\le&P(\F)+C_0\,P(\E_k)\,|\vol(\F)-\vol(\E_k)|+\Big(\d(\F,\E_k)+C_0\,P(\E_k)\,|\vol(\F)-\vol(\E_k)|\Big)^2\,.
  \end{eqnarray*}
  Hence $P(\E_k)\le P(\F)+\Lambda\,\d(\E_k,\F)$ thanks to $|\vol(\F)-\vol(\E_k)|\le\d(\F,\E_k)$ and since, for $k$ large enough, $P(\E_k)\le 2\,P(\H)$.
  \end{proof}

\begin{proof}
  [Proof of Theorem \ref{thm main periodic}] We argue by contradiction. If the theorem is false, then $\k=0$ and thus by Lemma \ref{thm selection} there exists a sequence $\{\E_k\}_{k\in\N}$ of $(\Lambda,r_0)$-minimizing unit-area tilings of $\T$ such that $\a(\E_k)>0$, $\a(\E_k)\to 0$ as $k\to\infty$ and
  \[
  P(\E_k)=P(\H)+o(\a(\E_k)^2)\,,\qquad\mbox{as $k\to\infty$}\,.
  \]
  Up to translation we may assume that $\a(\E_k)=\d(\E_k,\H)\to 0$ as $k\to\infty$. Let $L$ and $\mu_*$ be the constants of Theorem \ref{thm improved convergence} (which depends on $\Lambda$ and $\H$) so that for every $\mu<\mu_*$ there exists $k(\mu)\in\N$ such that $\E_k$ is a $(\e_k,\mu,L)$-perturbation of $\H$ for every $k\ge k(\mu)$, with $\e_k\to 0$ as $k\to\infty$. Let $\e_0$ and $\mu_0$ be determined as in Theorem \ref{hexagonal honeycomb thm torus quantitative small} depending on $L$ and $|\T|$. If we set $\mu=\min\{\mu_*,\mu_0\}$ and increase $k(\mu)$ so that $\e_k\le\e_0$ for $k\ge k(\mu)$, then by Theorem \ref{hexagonal honeycomb thm torus quantitative small}, one finds $v_k\in\R^2$ with $|v_k|\le C\,\e_k$ such that
  \[
  P(\E_k)-P(\H)\ge c_0\,\hd(\pa\E_k,v_k+\pa\H)^2\ge c\,\d(\E_k,v_k+\H)^2\ge c\,\a(\E_k)^2\,,
  \]
  for some positive constant $c$. We have thus reached a contradiction, and proved the theorem.
\end{proof}

\begin{proof}[Proof of Theorem \ref{thm pertub volumes}]
   Let $\E_j=\E_{m^j}$ be minimizers in \eqref{variational problem volumes} for a sequence $\{m^j\}_{j\in\N}$ such that $\sum_{h=1}^Nm^j_h=N$, $m_h^j>0$ and $m_h^j\to 1$ as $j\to\infty$. By an explicit construction, for every $j$ large enough we can construct a small deformation $\H_j$ of $\H$ such that $|\H_j(h)|=m_h^j$ and $P(\H_j)\le P(\H)+C\,\max_{1\le h\le N}|m_h^j-1|$, with $C$ independent from $j$. (Alternatively, one can apply Lemma \ref{lemma volume fixing} with $\E_0=\E=\F=\H$, $\PHI=P$, $\psi\equiv1$ and $\eta_h=m_h^j-1$.) As a consequence, $\sup_{j\in\N} P(\E_j)<\infty$, and thus, up to extracting subsequences, $\d(\E_j,\E_0)\to0$ where $\E_0$ is a unit-area tiling of $\T$. In particular,
  \[
  P(\H)\le P(\E_0)\le\liminf_{j\to\infty}P(\E_j)\le\liminf_{j\to\infty}P(\H)+C\,\max_{1\le h\le N}|m_h^j-1|=P(\H)\,.
  \]
  By Hales' theorem, up to a relabeling of $\E_0$, $\E_0=v+\H$ for $v=(t\sqrt{3}\ell,s\ell)$ and $t,s\in[0,1]$. By performing the same relabeling on each $\E_j$, we have $\d(\E_j,v+\H)\to 0$. By exploiting Lemma \ref{lemma volume fixing} as in the proof of Lemma \ref{thm selection} one sees that each $\E_j$ is a $(\Lambda,r_0)$-minimizing tiling in $\T$, and then by arguing as in the proof of Theorem \ref{thm main periodic} we find a constant $L$ (depending on $\Lambda$ and $\H$) such that $\E_j-v$ is an $(\e_j,\mu_0,L)$-perturbation of $\H$ for $\mu_0$ as in Theorem \ref{hexagonal honeycomb thm torus quantitative small} and for $\e_j\to 0$ as $j\to\infty$. By Theorem \ref{hexagonal honeycomb thm torus quantitative small}, for $j$ large enough there exist $v_j\to 0$ and $C^{1,1}$-diffeomorphism $f_j$ between $v_j+\pa\H$ and $\pa\E_j-v$, with
  \begin{eqnarray*}
     C\,\max_{1\le h\le N}|m_h^j-1|\ge P(\E_j)-P(\H)
     \ge c\,\Big(\|f_j-\Id\|_{C^0(v_j+\pa\H)}^2+\|f_j-\Id\|_{C^1(v_j+\pa\H)}^4\Big)\,.
  \end{eqnarray*}
  Theorem \ref{thm pertub volumes} is then deduced by a contradiction argument.
\end{proof}

\begin{proof}[Proof of Theorem \ref{thm pertub metric}]
   In the following we denote by $\E_\de$ a minimizer in \eqref{finsler}, and set
   \[
   \de=\de(\Phi,\psi)=\|\Phi-1\|_{C^0(\T\times S^1)}+\|\psi-1\|_{C^0(\T)}\,,
   \]
   so that $\de<\de_0$. We notice that for every $E\subset\T$ of finite perimeter one has
   \begin{eqnarray}
    \label{bella}
      \Big|\int_E\psi -|E|\Big|&\le& C\,|E|\,\|\psi-1\|_{C^0(\T)}\,,
      \\
    \label{bella 2}
      |\PHI(E)-P(E)|&\le&C\,\min\{P(E)\,,\PHI(E)\}\,\|\Phi-1\|_{C^0(\T\times S^1)}\,,
   \end{eqnarray}
   where in \eqref{bella 2} we have also used the fact that $P(E)\le2\,\PHI(E)$ provided $\de_0\le 1$.

   \medskip

   \noindent {\it Step one}: We claim that, provided $\de_0$ is small enough, then
  \begin{eqnarray}\label{uno}
    \PHI(\E_\de)&\le&2\,P(\H)\,,
    \\\label{due}
    P(\E_\de)&\le&P(\H)+C\,\de\,.
  \end{eqnarray}
  Indeed, by considering an explicit small modification of $\H$ (or by applying Lemma \ref{lemma volume fixing} with $\E=\E_0=\F=\H$ and $\eta\ne 0$) we can construct a $N$-tiling $\H'$ of $\T$ such that $\int_{\H'(h)}\psi=N^{-1}\,\int_\T\psi$ for every $h=1,...,N$ and $\PHI(\H')\le \PHI(\H)+C\,\de$. By $\PHI(\E_\de)\le \PHI(\H')$ and by \eqref{bella 2}
  \begin{equation}
    \label{c}
      \PHI(\E_\de)\le \PHI(\H)+C\,\de\le P(\H)+C\,\de\,,
  \end{equation}
  which implies \eqref{uno}. Again by \eqref{bella 2}, $P(\E_\de)\le\PHI(\E_\de)+C\,\de$, and \eqref{c} gives \eqref{due}.

  \medskip

  \noindent {\it Step two}: We now show that if $\de_j=\de(\Phi_j,\psi_j)\to 0$ and $\E_j$ is a minimizer in \eqref{finsler} associated to $\Phi_j$ and $\psi_j$, then (and up to subsequences and to relabeling the chambers of $\E_j$) $\d(\E_j,v+\H)\to 0$ for some $v=(t\sqrt{3}\ell,s\ell)$, $s,t\in[0,1]$. By \eqref{uno} and since $\PHI_j(E)\ge P(E)/2$ for every $E\subset\T$ we find that $\sup_{j\in\N}P(\E_j)\le 4\,P(\H)$. By compactness, there exists a $N$-tiling $\E_*$ of $\T$ such that $\d(\E_j,\E_*)\to 0$ (up to subsequences). By \eqref{bella}, $\int_{\E_j(h)}\psi_j=N^{-1}\int_{\T}\psi_j$ implies $m_j(h)=|\E_j(h)|\to 1$ for every $h=1,...,N$. In particular, $\E_*$ is a unit-area tiling of $\T$, and thus by \eqref{hexagonal honeycomb thm torus}, by lower semicontinuity and by \eqref{due}
  \begin{equation}
    \label{ciao}
      P(\H)\le P(\E_*)\le\liminf_{j\to\infty}P(\E_j)\le P(\H)\,.
  \end{equation}
  By Hales' theorem, up a relabeling, $\E_*=v+\H$.

  \medskip

  \noindent {\it Step three}: Let $\e_0$, $r_0$, $\s_0$ and $C_0$ be the constants associated to $\E_0=\H$, $\Phi$ and $\psi$ by Lemma \ref{lemma volume fixing}. (Notice that the same constants will work on any translation of $\H$, and that these constants ultimately depend on $L$ and $\g$ only.) By step two we can assume that $\de_0$ is small enough to entail $\d(\E_\de,v_\de+\H)\le\e_0$ for some translation $v_\de$. We now claim that there exist positive constants $r_1\,,c_0>0$ such that
  \begin{equation}
    \label{lb}
    |\E_\de(h)\cap B_{x,r}|\ge c_0\,r^2\,,\qquad\forall x\in\pa\E_\de(h)\,,r<r_1\,,h=1,...,N\,.
  \end{equation}
  This is a classical argument, see for example \cite[Lemma 30.2]{maggiBOOK}, and we include some details just for the sake of completeness. Without loss of generality let us set $h=1$ and fix $x\in\pa\E_\de(1)$ and $r<r_1\le r_0$ such that $P(\E_\de;\pa B_{x,r})=0$. There exists $j\in\{1,...,N\}$ such that
  \begin{equation}
    \label{questa}
      \H^1(\pa^*\E_\de(1)\cap\pa^*\E_\de(j)\cap B_{x,r})\ge \H^1(\pa^*\E_\de(1)\cap\pa^*\E_\de(h)\cap B_{x,r})\,,\qquad\forall h\ne 1,j\,.
  \end{equation}
  If we set $\F(1)=\E_\de(1)\setminus B_{x,r}$, $\F(j)=\E_\de(j)\cup(\E_\de(1)\cap B_{x,r})$ and $\F(h)=\E_\de(h)$ for $h\ne 1,j$, then by applying Lemma \ref{lemma volume fixing} with $\E_0=v_\de+\H$, $\E=\E_\de$, and $\eta=0$ and setting $u(r)=|\E_\de(1)\cap B_{x,r}|$, we find that, if $\e<r_0-r$, then
  \begin{eqnarray*}
    \PHI(\E_\de;B_{x,r+\e})&\le&\PHI(\F;B_{x,r+\e})+C_0\,P(\E_\de) \Big|\int_{\E_\de(1)\cap B_{x,r}}\psi\Big|
    \\
    &\le&
    \PHI(\E_\de;B_{x,r+\e})+\hat{\PHI}(B_{x,r};\E_\de(1))
    \\
    &&-\int_{\pa^*\E_\de(1)\cap\pa^*\E_\de(j)\cap B_{x,r}}\hat{\Phi}(y,\nu_{\E_\de(1)}(y))\,d\H^1+C\,u(r)\,,
  \end{eqnarray*}
  where we have set $\hat\Phi(x,\nu)=(\Phi(x,\nu)+\Phi(x,-\nu))/2$. In particular, by \eqref{questa} and by $2\ge\Phi\ge 1/2$, for every $h\ne 1$ one finds
  \[
  \H^1(\pa^*\E_\de(1)\cap\pa^*\E_\de(h)\cap B_{x,r})\le C\big(\H^1(\E_\de(1)\cap\pa B_{x,r})+u(r)\big)\,,
  \]
  i.e.
  \[
  P(\E_\de(1);B_{x,r})\le C(u'(r)+u(r))\,,\qquad\mbox{for a.e. $r<r_1$}\,.
  \]
  By adding $u'(r)=\H^1(\E_\de(1)\cap\pa B_{x,r})$ to both sides we find that
  \[
  C(u'(r)+u(r))\ge P(\E_\de(1)\cap B_{x,r})\ge 2\sqrt{\pi\,u(r)}\,.
  \]
  In particular if $r_1$ is small enough to give $C\,u(r)\le C\sqrt{\pi r_1^2\,u(r)}\le \sqrt{\pi\,u(r)}$, then we find $\sqrt{u(r)}\le C\,u'(r)$ for a.e. $r<r_1$. This proves \eqref{lb}.

  \medskip

  \noindent {\it Step four}: We now conclude the proof. Again by step two and by Lemma \ref{lemma volume fixing}, one can find a unit-area tiling $\E_\de'$ of $\T$ such that $P(\E_\de')\le P(\E_\de)+C\,\de$ and $\d(\E_\de',\E_\de)\le C\,\de$. By Theorem \ref{thm main periodic}  and up to permutations of the chambers of $\E_\de$, we find a translation $v_\de$ such that
  \[
  c\,\d(\E_\de',v_\de+\H)^2\le P(\E_\de')-P(\H)\le P(\E_\de)-P(\H)+C\,\de\le C\,\de\,,
  \]
  where in the last inequality we have used \eqref{due}. Since $\d(\E_\de',v_\de+\H)\ge \d(\E_\de,v_\de+\H)-\d(\E_\de',\E_\de)$ we conclude
  \[
  \d(\E_\de,v_\de+\H)^2\le C\,\de\,.
  \]
  Setting for the sake of brevity $v_\de=0$, we now pick $x\in\pa\E_\de(1)$ such that $\dist(x,\pa\H(1))\ge\dist(y,\pa\H(1))$ for every $y\in\pa\E_\de(1)$. Let $r=\min\{r_1,\dist(x,\pa\H(1))\}$, so that either $B_{x,r}\subset\T\setminus\H(1)$ or $B_{x,r}\subset\H(1)$. In particular, provided $\de_0$ is small enough with respect to $c_0$, either
  \[
  \d(\E_\de,\H)\ge|\E_\de(1)\setminus\H(1)|\ge |\E_\de(1)\cap B_{x,r}|\ge c_0\,r^2\ge c_0\,\dist(x,\pa\H(1))^2\,,
  \]
  or
  \begin{eqnarray*}
  \d(\E_\de,\H)&\ge&|\H(1)\setminus\E_\de(1)|\ge |B_{x,r}\setminus\E_\de(1)|=\Big|\bigcup_{h=2}^NB_{x,r}\cap\E_\de(h)\Big|
  \\
  &\ge& (N-1)c_0\,r^2\ge c_0\,\dist(x,\pa\H(1))^2\,;
  \end{eqnarray*}
  in both cases, $\pa\E_\de(1)\subset I_\e(\pa\H(1))$ for $\e=C\,\sqrt{\d(\E_\de,\H)}$. By the same argument (based on area density estimates for $\H$, which hold trivially) one finds that $\pa\H(1)\subset I_\e(\pa\E_\de(1))$.
\end{proof}

\bibliography{references}
\bibliographystyle{is-alpha}
\end{document}